\newtheorem{theorem}{Theorem}[section]
\newtheorem{corollary}{Corollary}
\newtheorem{lemma}[theorem]{Lemma}
\theoremstyle{definition}
\newtheorem{definition}[theorem]{Definition}
\newtheorem{remark}{Remark}
\newtheorem{example}{Example}
\title[Fractional Difference Problems of the Calculus of Variations]{Necessary
Optimality Conditions\\
for Fractional Difference Problems\\
of the Calculus of Variations}
\author[N. R. O. Bastos, R. A. C. Ferreira and D. F. M. Torres]{}
\subjclass{49K05, 39A12, 26A33}
\keywords{Fractional difference calculus, calculus of variations,
fractional summation by parts, Euler--Lagrange equation, Legendre
necessary condition.}
\email{nbastos@estv.ipv.pt}
\email{ruiacferreira@ua.pt}
\email{delfim@ua.pt}
\begin{document}

\maketitle

\centerline{\scshape Nuno R. O. Bastos}
\medskip
{\footnotesize
 \centerline{Department of Mathematics, ESTGV}
   \centerline{Polytechnic Institute of Viseu}
   \centerline{3504-510 Viseu, Portugal}
}

\medskip

\centerline{\scshape Rui A. C. Ferreira}
\medskip
 {\footnotesize
 \centerline{Faculty of Engineering and Natural Sciences}
   \centerline{Lusophone University of Humanities and Technologies}
   \centerline{1749-024 Lisbon, Portugal}
}

\medskip
\centerline{\scshape Delfim F. M. Torres}
\medskip
{\footnotesize
 \centerline{Department of Mathematics}
   \centerline{University of Aveiro}
   \centerline{3810-193 Aveiro, Portugal}
}

\bigskip

\centerline{(Submitted 09-May-2009; revised 27-March-2010; accepted 04-July-2010)}

\begin{abstract}
We introduce a discrete-time fractional calculus of variations.
First and second order necessary optimality conditions are
established. Examples illustrating the use of the new Euler-Lagrange
and Legendre type conditions are given. They show that the solutions
of the fractional problems coincide with the solutions of the
corresponding non-fractional variational problems when the order of
the discrete derivatives is an integer value.
\end{abstract}

% ------------------------------------------------

\section{Introduction}
\label{int}

The Fractional Calculus is currently a very important research
field in several different areas: physics (including classical and quantum mechanics
and thermodynamics), chemistry, biology, economics and control theory
\cite{K:S:T:06,Miller1,Ortigueira,S:K:M,TenreiroMachado}.
It has origin more than 300 years ago when L'Hopital
asked Leibniz what should be the meaning of a derivative
of order $1/2$. After that episode several more famous mathematicians
contributed to the development of Fractional Calculus:
Abel, Fourier, Liouville, Riemann, Riesz,
just to mention a few names.

In \cite{Miller} Miller and Ross define a fractional sum of
order $\nu>0$ \emph{via} the solution of a linear difference
equation. Namely, they present it as (see Section~\ref{sec0} for
the notations used here)
\begin{equation}\label{naosei8}
\Delta^{-\nu}f(t)=\frac{1}{\Gamma(\nu)}\sum_{s=a}^{t-\nu}(t-\sigma(s))^{(\nu-1)}f(s).
\end{equation}
This was done in analogy with the Riemann--Liouville fractional
integral of order $\nu>0$,
\begin{equation*}
_a\mathbf{D}_x^{-\nu}f(x)=\frac{1}{\Gamma(\nu)}\int_{a}^{x}(x-s)^{\nu-1}f(s)ds,
\end{equation*}
which can be obtained \emph{via} the solution of a linear
differential equation \cite{Miller,Miller1}. Some basic properties
of the sum in \eqref{naosei8} were obtained in \cite{Miller}.
More recently, F.~Atici and P.~Eloe \cite{Atici0,Atici}
defined the fractional difference of order $\alpha>0$, \textrm{i.e.},
$\Delta^\alpha f(t)=\Delta^m(\Delta^{-(m-\alpha)}f(t))$
with $m$ the least integer satisfying $m \ge \alpha$,
and developed some of its properties that allow
to obtain solutions of certain
fractional difference equations.

Fractional differential calculus has been widely developed in the
past few decades due mainly to its demonstrated applications in
various fields of science and engineering.
The study of fractional problems of the Calculus of Variations
and respective Euler-Lagrange equations is a fairly recent issue
-- see \cite{agr0,agr1,A:B:07,A:M:T:10,A:T:09,B:D:A:09,El-Nabulsi1,F:T:07,gasta1,M:T:10}
and references therein -- and include only the continuous case.
It is well known that discrete analogues of differential equations
can be very useful in applications \cite{Zeidan1,book:DCV}.
Therefore, we consider pertinent  to start here a fractional
discrete-time theory of the calculus of variations.

Our objective is two-fold. On one hand we
proceed to develop the theory of \emph{fractional difference
calculus}, namely, we introduce the concept of left and right
fractional sum/difference (\textrm{cf.} Definitions~\ref{def0}
and \ref{def1} below) and prove some new results related to
them. On the other hand, we believe that the present
work will potentiate research not only in
the fractional calculus of variations but also in solving fractional
difference equations, specifically, fractional equations in which
left and right fractional differences appear.

Because the theory of fractional difference calculus is in its
infancy \cite{Atici0,Atici,Miller}, the paper is self contained.
We begin, in Section~\ref{sec0},
to give the definitions and results needed throughout.
In Section~\ref{sec1} we present and prove the new results;
in Section~\ref{sec2} we give some examples.
Finally, in Section~\ref{sec:conc} we mention the
main conclusions of the paper, and some possible
extensions and open questions.
Computer code done in the Computer Algebra System
\textsf{Maxima} is given in Appendix.

% ------------------------------------------------

\section{Preliminaries}
\label{sec0}

We begin by introducing some notation used throughout. Let $a$
be an arbitrary real number and $b = k + a$
for a certain $k \in\mathbb{N}$ with $k \ge 2$.
We put $\mathbb{T}= \{a, a + 1, \ldots, b\}$,
$\mathbb{T}^\kappa=\{a, a + 1, \ldots, b-1\}$ and
$\mathbb{T}^{\kappa^2}=\{a, \ldots, b - 2\}$. Denote by
$\mathcal{F}$ the set of all real valued functions defined on
$\mathbb{T}$. Also, we will frequently write $\sigma(t)=t+1$,
$\rho(t)=t-1$ and $f^\sigma(t)=f(\sigma(t))$. The usual
conventions $\sum_{t=c}^{c-1}f(t)=0$, $c\in\mathbb{T}$, and
$\prod_{i=0}^{-1}f(i)=1$ remain valid here.

As usual, the forward difference is defined by $\Delta
f(t)=f^\sigma(t)-f(t)$. If we have a function $f$ of two
variables, $f(t,s)$, its partial (difference) derivatives are
denoted by $\Delta_t$ and $\Delta_s$, respectively. For arbitrary
$x,y\in\mathbb{R}$ define (when it makes sense)
$$x^{(y)}=\frac{\Gamma(x+1)}{\Gamma(x+1-y)},$$
where $\Gamma$ is the gamma function. The following property of
the gamma function,
\begin{equation}
\label{naosei9}
\Gamma(x+1)=x\Gamma(x),
\end{equation}
will be frequently used.

As was mentioned in Section~\ref{int}, equality \eqref{naosei8}
was introduced in \cite{Miller} as \emph{the fractional sum of
order} $\nu>0$. While reaching the proof of
Theorem~\ref{teor1} we actually ``find" the definition of left and right
fractional sum:

\begin{definition}\label{def0}
Let $f\in\mathcal{F}$. The \emph{left fractional sum}
and the \emph{right fractional sum} of order $\nu>0$ are defined,
respectively, as
\begin{equation}\label{sum1}
_a\Delta_t^{-\nu}f(t)=\frac{1}{\Gamma(\nu)}\sum_{s=a}^{t-\nu}(t-\sigma(s))^{(\nu-1)}f(s),
\end{equation}
and
\begin{equation}\label{sum2}
_t\Delta_b^{-\nu}f(t)=\frac{1}{\Gamma(\nu)}\sum_{s=t+\nu}^{b}(s-\sigma(t))^{(\nu-1)}f(s).
\end{equation}
\end{definition}

\begin{remark}
The above sums \eqref{sum1} and \eqref{sum2} are defined for
$t \in \{a+\nu, a + \nu + 1, \ldots, b + \nu\}$ and $t \in \{a-\nu, a - \nu + 1, \ldots, b- \nu\}$,
respectively, while $f(t)$ is defined for $t \in \{a, a + 1, \ldots, b\}$. Throughout we will write
\eqref{sum1} and \eqref{sum2}, respectively, in the following way:
\begin{equation*}
_a\Delta_t^{-\nu}f(t)=\frac{1}{\Gamma(\nu)}\sum_{s=a}^{t}(t+\nu-\sigma(s))^{(\nu-1)}f(s),\quad
t\in\mathbb{T},
\end{equation*}
\begin{equation*}
_t\Delta_b^{-\nu}f(t)=\frac{1}{\Gamma(\nu)}\sum_{s=t}^{b}(s+\nu-\sigma(t))^{(\nu-1)}f(s),\quad
t\in\mathbb{T}.
\end{equation*}
\end{remark}
\begin{remark}
The left fractional sum defined in
\eqref{sum1} coincides with the fractional sum defined in
\cite{Miller} (see also \eqref{naosei8}). The analogy of
\eqref{sum1} and \eqref{sum2} with the Riemann--Liouville left and
right fractional integrals of order $\nu>0$ is clear:
$$_a\mathbf{D}_x^{-\nu}f(x)=\frac{1}{\Gamma(\nu)}\int_{a}^{x}(x-s)^{\nu-1}f(s)ds,$$
$$_x\mathbf{D}_b^{-\nu}f(x)=\frac{1}{\Gamma(\nu)}\int_{x}^{b}(s-x)^{\nu-1}f(s)ds.$$
\end{remark}
It was proved in \cite{Miller} that $\lim_{\nu\rightarrow
0}{_a}\Delta_t^{-\nu}f(t)=f(t)$. We do the same for the right
fractional sum using a different method. Let $\nu>0$ be arbitrary.
Then,
\begin{align*}
{_t}\Delta_b^{-\nu}f(t)&=\frac{1}{\Gamma(\nu)}\sum_{s=t}^{b}(s+\nu-\sigma(t))^{(\nu-1)}f(s)\\
&=f(t)+\frac{1}{\Gamma(\nu)}\sum_{s=\sigma(t)}^{b}(s+\nu-\sigma(t))^{(\nu-1)}f(s)\\
&=f(t)+\sum_{s=\sigma(t)}^{b}\frac{\Gamma(s+\nu-t)}{\Gamma(\nu)\Gamma(s-t+1)}f(s)\\
&=f(t)+\sum_{s=\sigma(t)}^{b}\frac{\prod_{i=0}^{s-t-1}(\nu+i)}{\Gamma(s-t+1)}f(s).
\end{align*}
Therefore, $\lim_{\nu\rightarrow 0}{_t}\Delta_b^{-\nu}f(t)=f(t)$.
It is now natural to define
\begin{equation}\label{naosei10}
_a\Delta_t^{0}f(t)={_t}\Delta_b^{0}f(t)=f(t),
\end{equation}
which we do here, and to write
\begin{equation}\label{seila1}
{_a}\Delta_t^{-\nu}f(t)=f(t)+\frac{\nu}{\Gamma(\nu+1)}\sum_{s
=a}^{t-1}(t+\nu-\sigma(s))^{(\nu-1)}f(s),\quad
t\in\mathbb{T},\quad \nu\geq 0,
\end{equation}
\begin{equation*}
{_t}\Delta_b^{-\nu}f(t)=f(t)+\frac{\nu}{\Gamma(\nu+1)}\sum_{s
=\sigma(t)}^{b}(s+\nu-\sigma(t))^{(\nu-1)}f(s),\quad
t\in\mathbb{T},\quad \nu\geq 0.
\end{equation*}

The next theorem was proved in \cite{Atici}.
\begin{theorem}[\cite{Atici}]
\label{thm2}
Let $f\in\mathcal{F}$ and $\nu>0$. Then, the equality
\begin{equation*}
{_a}\Delta_{t}^{-\nu}\Delta
f(t)=\Delta(_a\Delta_t^{-\nu}f(t))-\frac{(t+\nu-a)^{(\nu-1)}}{\Gamma(\nu)}f(a),\quad
t\in\mathbb{T}^\kappa,
\end{equation*}
holds.
\end{theorem}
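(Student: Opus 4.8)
The plan is to establish the identity by direct computation, comparing both sides term by term. Throughout I abbreviate the summation kernel as $K(t,s)=(t+\nu-\sigma(s))^{(\nu-1)}$, so that ${}_a\Delta_t^{-\nu}f(t)=\frac{1}{\Gamma(\nu)}\sum_{s=a}^{t}K(t,s)f(s)$. Two elementary facts will do the bulk of the work. First, the falling-factorial evaluation $(\nu-1)^{(\nu-1)}=\Gamma(\nu)/\Gamma(1)=\Gamma(\nu)$, which follows straight from the definition $x^{(y)}=\Gamma(x+1)/\Gamma(x+1-y)$. Second, a power rule for the kernel obtained from the recurrence \eqref{naosei9}: differencing $(t+\nu-s)^{(\nu-1)}$ in $s$ (with $t$ fixed) and differencing $K(t,s)$ in $t$ (with $s$ fixed) produce, up to sign, the same expression, the crucial bookkeeping observation being that these two differences are negatives of one another.

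First I would compute the right-hand side. Writing out $\Delta({}_a\Delta_t^{-\nu}f(t))={}_a\Delta_{t+1}^{-\nu}f(t+1)-{}_a\Delta_t^{-\nu}f(t)$, the only subtlety is that the upper summation limit jumps from $t$ to $t+1$. Splitting off the new $s=t+1$ term and using $K(t+1,t+1)=(\nu-1)^{(\nu-1)}=\Gamma(\nu)$, this top term collapses to exactly $f(t+1)$, leaving
\begin{equation*}
\Delta({}_a\Delta_t^{-\nu}f(t))=f(t+1)+\frac{1}{\Gamma(\nu)}\sum_{s=a}^{t}\Delta_t K(t,s)\,f(s).
\end{equation*}

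Next I would attack the left-hand side with discrete summation by parts. Taking $u(s)=(t+\nu-s)^{(\nu-1)}$ so that $u(\sigma(s))=K(t,s)$, the rule $\sum_{s=a}^{t}u(\sigma(s))\Delta f(s)=[u(s)f(s)]_{s=a}^{s=t+1}-\sum_{s=a}^{t}f(s)\Delta_s u(s)$ turns ${}_a\Delta_t^{-\nu}\Delta f(t)$ into a boundary contribution plus a residual sum. The boundary term at $s=t+1$ is $u(t+1)f(t+1)=(\nu-1)^{(\nu-1)}f(t+1)=\Gamma(\nu)f(t+1)$, and at $s=a$ it is $-u(a)f(a)=-(t+\nu-a)^{(\nu-1)}f(a)$; after dividing by $\Gamma(\nu)$ these produce precisely the $f(t+1)$ and the correction $-\frac{(t+\nu-a)^{(\nu-1)}}{\Gamma(\nu)}f(a)$ demanded by the statement. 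Finally, invoking $\Delta_s u(s)=-\Delta_t K(t,s)$ converts the residual sum into $+\frac{1}{\Gamma(\nu)}\sum_{s=a}^{t}\Delta_t K(t,s)f(s)$, which is exactly the sum appearing in the right-hand side computed above, and the identity follows.

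The main obstacle I anticipate is not conceptual but careful bookkeeping: getting the power rule $\Delta_s(t+\nu-s)^{(\nu-1)}=-\Delta_t K(t,s)$ right by a correct application of \eqref{naosei9}, and tracking the shifting upper limit $t$ in every sum so that the split-off boundary terms and the $(\nu-1)^{(\nu-1)}=\Gamma(\nu)$ collapses are accounted for without an off-by-one error. One should also check that the restriction $t\in\mathbb{T}^\kappa$ keeps every index inside the domain where the kernel and $f$ are defined, so that no spurious endpoint contributions arise.
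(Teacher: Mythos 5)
Your proof is correct: the collapse $(\nu-1)^{(\nu-1)}=\Gamma(\nu)/\Gamma(1)=\Gamma(\nu)$, the boundary terms $u(t+1)f(t+1)=\Gamma(\nu)f(t+1)$ and $-u(a)f(a)=-(t+\nu-a)^{(\nu-1)}f(a)$, and the cancellation $\Delta_s u(s)=-\Delta_t K(t,s)$ all check out, and the residual sums on the two sides match exactly, yielding the stated identity for every $t\in\mathbb{T}^\kappa$. One point of comparison is worth recording: the paper does not prove this theorem at all --- it is quoted from Atici and Eloe \cite{Atici} --- but it does prove the mirror statement for the right fractional sum, equation \eqref{naosei12}, and your argument is precisely the left-handed version of that proof: the same product rule and summation by parts in $s$, the same collapse of the endpoint kernel value $(\nu-1)^{(\nu-1)}$, and the same identification of the $t$-difference of the kernel with minus a difference in $s$. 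Where the paper carries this out via the explicit power rule $\Delta_s(s+\nu-\sigma(t))^{(\nu-1)}=(\nu-1)(s+\nu-\sigma(t))^{(\nu-2)}$, your shift-symmetry observation sidesteps that computation entirely; in fact, contrary to your closing worry, it does not even require the recurrence \eqref{naosei9}, since $\Delta_s u(s)$ and $\Delta_t K(t,s)$ both equal $\pm\left[(t+\nu-s-1)^{(\nu-1)}-(t+\nu-s)^{(\nu-1)}\right]$ by pure translation of the argument, with no gamma-function identity needed. This makes your route marginally more elementary than the paper's treatment of the counterpart result, at the cost of not producing the closed-form $(\nu-2)$-power expression for the difference of the kernel, which the paper reuses later.
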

\begin{remark}\label{rem0}
It is easy to include the case $\nu=0$ in Theorem~\ref{thm2}.
Indeed, in view of \eqref{naosei9} and \eqref{naosei10}, we get
\begin{equation}\label{seila0}
{_a}\Delta_{t}^{-\nu}\Delta
f(t)=\Delta(_a\Delta_t^{-\nu}f(t))
-\frac{\nu}{\Gamma(\nu+1)}(t+\nu-a)^{(\nu-1)}f(a),\quad
t\in\mathbb{T}^\kappa,
\end{equation}
for all $\nu\geq 0$.
\end{remark}
Now, we prove the counterpart of Theorem~\ref{thm2}
for the right fractional sum.
\begin{theorem}
Let $f\in\mathcal{F}$ and $\nu\geq 0$. Then, the equality
\begin{equation}\label{naosei12}
{_t}\Delta_{\rho(b)}^{-\nu}\Delta
f(t)=\frac{\nu}{\Gamma(\nu+1)}(b+\nu-\sigma(t))^{(\nu-1)}f(b)
+\Delta(_t\Delta_b^{-\nu}f(t)),\quad t\in\mathbb{T}^\kappa,
\end{equation}
holds.
\end{theorem}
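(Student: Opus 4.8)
The plan is to establish \eqref{naosei12} by a single application of discrete summation by parts, mirroring the proof of Theorem~\ref{thm2} for the right sum. First I would dispose of the degenerate case $\nu=0$: by \eqref{naosei10} the left-hand side of \eqref{naosei12} equals $\Delta f(t)$, while on the right the prefactor $\nu/\Gamma(\nu+1)$ annihilates the first term and $\Delta({}_t\Delta_b^{0}f(t))=\Delta f(t)$, so the identity holds trivially. Hence I may assume $\nu>0$. Writing the right fractional sum (Definition~\ref{def0}) in the equivalent form ${}_t\Delta_b^{-\nu}f(t)=\frac{1}{\Gamma(\nu)}\sum_{s=t}^{b}(s+\nu-\sigma(t))^{(\nu-1)}f(s)$ and then replacing the upper endpoint $b$ by $\rho(b)=b-1$ and $f$ by $\Delta f$, the left-hand side of \eqref{naosei12} becomes
$${}_t\Delta_{\rho(b)}^{-\nu}\Delta f(t)=\sum_{s=t}^{b-1}u(s)\,\Delta f(s),\qquad u(s):=\frac{1}{\Gamma(\nu)}(s+\nu-\sigma(t))^{(\nu-1)},$$
with $t$ held fixed and $\Delta,\Delta_s$ denoting differences in $t$ and in $s$, respectively.

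Next I would apply summation by parts, $\sum_{s=t}^{b-1}u(s)\,\Delta f(s)=\bigl[u(s)f(s)\bigr]_{s=t}^{s=b}-\sum_{s=t}^{b-1}f(\sigma(s))\,\Delta_s u(s)$, where the discrete power rule $\Delta_x x^{(y)}=y\,x^{(y-1)}$ yields $\Delta_s u(s)=\frac{\nu-1}{\Gamma(\nu)}(s+\nu-\sigma(t))^{(\nu-2)}$. The endpoint terms are then simplified with \eqref{naosei9}: at $s=b$ one obtains $\frac{1}{\Gamma(\nu)}(b+\nu-\sigma(t))^{(\nu-1)}f(b)=\frac{\nu}{\Gamma(\nu+1)}(b+\nu-\sigma(t))^{(\nu-1)}f(b)$, which is exactly the first term on the right of \eqref{naosei12}; at $s=t$ one uses $(\nu-1)^{(\nu-1)}=\Gamma(\nu)$ to get $-u(t)f(t)=-f(t)$.

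It then remains to identify the surviving sum $-\sum_{s=t}^{b-1}f(\sigma(s))\,\Delta_s u(s)$, taken together with the boundary contribution $-f(t)$, with $\Delta({}_t\Delta_b^{-\nu}f(t))$. For this I would compute $\Delta({}_t\Delta_b^{-\nu}f(t))={}_{\sigma(t)}\Delta_b^{-\nu}f(\sigma(t))-{}_t\Delta_b^{-\nu}f(t)$ directly from the $\nu\geq0$ form of the right fractional sum, merging the two resulting sums by the same power rule. The main obstacle is precisely this final matching. After reindexing by the shift $s\mapsto\sigma(s)$, both the surviving sum and $\Delta({}_t\Delta_b^{-\nu}f(t))$ reduce to the same sum $\frac{\nu(\nu-1)}{\Gamma(\nu+1)}\sum(s+\nu-t-2)^{(\nu-2)}f(s)$ but with lower limits differing by one, accompanied by loose terms in $f(t)$ and $f(\sigma(t))$; one must check, using the evaluations $\nu^{(\nu-1)}=\Gamma(\nu+1)$ and $(\nu-1)^{(\nu-2)}=\Gamma(\nu)$, that the extra $s=\sigma(t)$ term accounts exactly for the discrepancy in the loose terms, so that the two expressions coincide. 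Once this bookkeeping closes, \eqref{naosei12} follows.
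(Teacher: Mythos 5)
Your proposal is correct and takes essentially the same route as the paper: the paper's summed product rule $\Delta_s\bigl((s+\nu-\sigma(t))^{(\nu-1)}f(s)\bigr)$ over $s=t,\ldots,b-1$ is exactly your Abel summation by parts, yielding the same boundary terms $\frac{\nu}{\Gamma(\nu+1)}(b+\nu-\sigma(t))^{(\nu-1)}f(b)$ and $-\frac{(\nu-1)^{(\nu-1)}}{\Gamma(\nu)}f(t)=-f(t)$. Your closing step --- computing $\Delta({}_t\Delta_b^{-\nu}f(t))$ directly from the definition, merging the two shifted sums via the power rule, and reconciling the loose terms using $\nu^{(\nu-1)}=\Gamma(\nu+1)$ and $(\nu-1)^{(\nu-2)}=\Gamma(\nu)$ --- is also how the paper concludes, and that bookkeeping does indeed close.
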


\begin{proof}
We only prove the case $\nu>0$ as the case $\nu=0$ is trivial (see
Remark~\ref{rem0}). We start by fixing an arbitrary
$t\in\mathbb{T}^\kappa$. Then, we have that, for all
$s\in\mathbb{T}^\kappa$,
\begin{multline*}
\Delta_s\left((s+\nu-\sigma(t))^{(\nu-1)}f(s))\right)\\
=(\nu-1)(s+\nu-\sigma(t))^{(\nu-2)}f^\sigma(s)
+(s+\nu-\sigma(t))^{(\nu-1)}\Delta f(s),
\end{multline*}
hence,
\begin{equation*}
\begin{split}
\frac{1}{\Gamma(\nu)} & \sum_{s=t}^{b-1}(s+\nu-\sigma(t))^{(\nu-1)}\Delta f(s)\\
&=\left[\frac{(s+\nu-\sigma(t))^{(\nu-1)}}{\Gamma(\nu)}f(s)\right]_{s=t}^{s=b}
-\frac{1}{\Gamma(\nu)}\sum_{s=t}^{b-1}(\nu-1)(s+\nu-\sigma(t))^{(\nu-2)}
f^\sigma(s)\\
&=\frac{(b+\nu-\sigma(t))^{(\nu-1)}}{\Gamma(\nu)}f(b)
-\frac{(\nu-1)^{(\nu-1)}}{\Gamma(\nu)}f(t)\\
&\qquad
-\frac{1}{\Gamma(\nu)}\sum_{s=t}^{b-1}(\nu-1)(s+\nu-\sigma(t))^{(\nu-2)}
f^\sigma(s).
\end{split}
\end{equation*}
We now compute $\Delta(_t\Delta_b^{-\nu}f(t))$:
\begin{equation*}
\begin{split}
\Delta(_t\Delta_b^{-\nu}f(t))&=
\frac{1}{\Gamma(\nu)}\left[\sum_{s=\sigma(t)}^{b}(s+\nu-\sigma(t+1))^{(\nu-1)}
f(s)\right.\\
& \left. \qquad\qquad\quad -\sum_{s=t}^{b}(s+\nu-\sigma(t))^{(\nu-1)} f(s)\right]\\
&=\frac{1}{\Gamma(\nu)}\left[\sum_{s=\sigma(t)}^{b}(s+\nu-\sigma(t+1))^{(\nu-1)}
f(s)\right.\\
&\qquad\qquad\quad \left.-\sum_{s=\sigma(t)}^{b}(s+\nu-\sigma(t))^{(\nu-1)}
f(s)\right]-\frac{(\nu-1)^{(\nu-1)}}{\Gamma(\nu)}f(t)\\
&=\frac{1}{\Gamma(\nu)}\sum_{s=\sigma(t)}^{b}\Delta_t(s+\nu-\sigma(t))^{(\nu-1)}
f(s)-\frac{(\nu-1)^{(\nu-1)}}{\Gamma(\nu)}f(t)\\
&=-\frac{1}{\Gamma(\nu)}\sum_{s=t}^{b-1}(\nu-1)(s+\nu-\sigma(t))^{(\nu-2)}
f^\sigma(s)-\frac{(\nu-1)^{(\nu-1)}}{\Gamma(\nu)}f(t).
\end{split}
\end{equation*}
Since $t$ is arbitrary, the theorem is proved.
\end{proof}

\begin{definition}\label{def1}
Let $0<\alpha\leq 1$ and set $\mu=1-\alpha$. Then, the \emph{left
fractional difference} and the \emph{right fractional difference}
of order $\alpha$ of a function $f\in\mathcal{F}$ are defined, respectively, by
$$_a\Delta_t^\alpha f(t)=\Delta(_a\Delta_t^{-\mu}f(t)),\quad t\in\mathbb{T}^\kappa,$$
and
$$_t\Delta_b^\alpha f(t)=-\Delta(_t\Delta_b^{-\mu}f(t))),\quad t\in\mathbb{T}^\kappa.$$
\end{definition}

% ------------------------------------------------

\section{Main Results}
\label{sec1}

Our aim is to introduce the discrete-time fractional problem
of the calculus of variations and
to prove corresponding necessary optimality conditions.
In order to obtain an analogue of the Euler-Lagrange
equation (\textrm{cf.} Theorem~\ref{thm0}) we
first prove a fractional formula of summation by parts.
The results of the paper give discrete analogues to
the fractional Riemann--Liouville results available
in the literature: Theorem~\ref{teor1}
is the discrete analog of fractional
integration by parts \cite{Riewe,S:K:M};
Theorem~\ref{thm0} is the discrete analog
of the fractional Euler-Lagrange equation of Agrawal
\cite[Theorem~1]{agr0}; the natural boundary conditions
\eqref{rui1} and \eqref{rui2} are the discrete fractional analogues
of the transversality conditions in \cite{A:TC:06,M:T:10}.
However, to the best of the authors knowledge, no counterpart to
our Theorem~\ref{thm1} exists in the literature of
continuous fractional variational problems.

% -------------

\subsection{Fractional Summation by Parts}

The next lemma is used in the proof of Theorem~\ref{teor1}.

\begin{lemma}\label{lem0}
Let $f$ and $h$ be two functions defined on $\mathbb{T}^\kappa$
and $g$ a function defined on
$\mathbb{T}^\kappa\times\mathbb{T}^\kappa$. Then,
the equality
\begin{equation*}
\sum_{\tau=a}^{b-1}f(\tau)\sum_{s=a}^{\tau-1}g(\tau,s)h(s)
=\sum_{\tau=a}^{b-2}h(\tau)\sum_{s=\sigma(\tau)}^{b-1}g(s,\tau)f(s)
\end{equation*}
holds.
\end{lemma}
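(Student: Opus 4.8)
The plan is to prove this as a purely combinatorial identity about interchanging the order of a double summation. The left-hand side sums over pairs $(\tau,s)$ subject to the constraints $a \le \tau \le b-1$ and $a \le s \le \tau - 1$, while the right-hand side sums over pairs $(\tau,s)$ subject to $a \le \tau \le b-2$ and $\sigma(\tau) \le s \le b-1$. The core observation is that both sides range over exactly the same index set of pairs, just traversed in a different order, so the identity should follow by Fubini's theorem for finite sums (i.e., by swapping the order of summation) together with a relabeling of the dummy variables.

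First I would describe the common region of summation explicitly. On the left, the inner constraint $s \le \tau - 1$ combined with $s \ge a$ forces $\tau \ge a+1$ whenever the inner sum is nonempty (by the convention $\sum_{t=c}^{c-1} = 0$, the term $\tau = a$ contributes nothing), so the effective set of pairs is
\begin{equation*}
S = \{(\tau,s) : a \le s < \tau \le b-1\}.
\end{equation*}
Next I would re-sum over $S$ by letting $s$ be the outer index and $\tau$ the inner one: for a fixed $s$ with $a \le s \le b-2$, the values of $\tau$ satisfying $s < \tau \le b-1$ are precisely $\sigma(s) \le \tau \le b-1$. This yields
\begin{equation*}
\sum_{\tau=a}^{b-1} f(\tau) \sum_{s=a}^{\tau-1} g(\tau,s) h(s)
= \sum_{s=a}^{b-2} h(s) \sum_{\tau=\sigma(s)}^{b-1} g(\tau,s) f(\tau).
\end{equation*}
Finally, renaming the dummy variable $s \leftrightarrow \tau$ on the right-hand side produces exactly the claimed expression, since $g(\tau,s)f(\tau)$ becomes $g(s,\tau)f(s)$ and the summation limits transform accordingly.

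The step requiring the most care is the bookkeeping of the summation limits, especially confirming that the endpoint conventions make the boundary terms ($\tau = a$ on the left, $s = b-1$ on the right) vanish or match correctly, so that the upper limit $b-2$ on the outer sum of the right-hand side is exactly right. This is where an off-by-one error would most easily creep in, so I would verify it by checking that the pair set $S$ is reproduced faithfully under the reindexing, rather than by manipulating the summation bounds formally. Once the region $S$ is identified as the common index set for both iterated sums, the identity is immediate and no properties of $f$, $g$, or $h$ beyond being well-defined on the stated domains are needed.
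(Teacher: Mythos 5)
Your proof is correct, and it takes a genuinely different route from the paper. The paper's entire proof is a one-line citation: choose $\mathbb{T}=\mathbb{Z}$ and $F(\tau,s)=f(\tau)g(\tau,s)h(s)$ in Theorem~10 of \cite{Akin}, so the lemma is obtained there as the discrete specialization of a general interchange theorem for dynamic equations on measure chains (time scales). You instead give a self-contained, elementary Fubini-type argument: identify the common index set $S=\{(\tau,s): a\le s<\tau\le b-1\}$, noting that the $\tau=a$ term vanishes by the convention $\sum_{t=c}^{c-1}=0$; re-sum with $s$ outside, which forces $a\le s\le b-2$ and $\sigma(s)\le\tau\le b-1$; and finally relabel the dummy variables to land exactly on the stated right-hand side. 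Your bookkeeping of the limits is accurate, including the upper limit $b-2$, and you correctly observe that nothing beyond well-definedness of $f$, $g$, $h$ on the stated domains is used. What each approach buys: yours keeps the argument self-contained and makes the off-by-one structure completely transparent, which is valuable since that is exactly where such identities typically go wrong; the paper's citation is shorter and situates the identity within the time-scales framework, where the same statement holds on an arbitrary measure chain rather than only on $\mathbb{Z}$ -- in effect, your computation is the $\mathbb{T}=\mathbb{Z}$ instance of the proof of the cited general theorem.
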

\begin{proof}
Choose $\mathbb{T} = \mathbb{Z}$ and
$F(\tau,s) = f(\tau) g(\tau,s) h(s)$ in Theorem~10 of \cite{Akin}.
\end{proof}

The next result gives a \emph{fractional summation by parts} formula.

\begin{theorem}[Fractional summation by parts]
\label{teor1}
Let $f$ and $g$ be real valued functions defined on $\mathbb{T}^k$
and $\mathbb{T}$, respectively. Fix $0<\alpha\leq 1$ and put
$\mu=1-\alpha$. Then,
\begin{multline*}
\sum_{t=a}^{b-1}f(t)_a\Delta_t^\alpha
g(t)=f(b-1)g(b)-f(a)g(a)+\sum_{t=a}^{b-2}{_t\Delta_{\rho(b)}^\alpha
f(t)g^\sigma(t)}\\
+\frac{\mu}{\Gamma(\mu+1)}g(a)\left(\sum_{t=a}^{b-1}(t+\mu-a)^{(\mu-1)}f(t)
-\sum_{t=\sigma(a)}^{b-1}(t+\mu-\sigma(a))^{(\mu-1)}f(t)\right).
\end{multline*}
\end{theorem}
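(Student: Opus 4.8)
The plan is to make the left fractional difference on the left-hand side explicit, interchange the order of the ensuing double sum by Lemma~\ref{lem0}, and then reassemble the result as the right fractional difference of $f$, carefully tracking every boundary contribution.

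First I would differentiate the representation \eqref{seila1} of ${}_a\Delta_t^{-\mu}g$. Applying $\Delta_t$ and using the power rule $\Delta_t(t+c)^{(\mu-1)}=(\mu-1)(t+c)^{(\mu-2)}$ together with the value $\mu^{(\mu-1)}=\Gamma(\mu+1)$, one obtains
\[
{}_a\Delta_t^\alpha g(t)=g^\sigma(t)-\alpha g(t)+\frac{\mu(\mu-1)}{\Gamma(\mu+1)}\sum_{s=a}^{t-1}(t+\mu-\sigma(s))^{(\mu-2)}g(s).
\]
Multiplying by $f(t)$ and summing over $\{a,\ldots,b-1\}$ splits the left-hand side into a \emph{local} part $\sum_{t=a}^{b-1}f(t)\bigl(g^\sigma(t)-\alpha g(t)\bigr)$ and a double sum whose kernel is $(t+\mu-\sigma(s))^{(\mu-2)}$.

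Next I would apply Lemma~\ref{lem0} to the double sum (its $g(\tau,s)$ being the kernel, its $h$ being $g$, its $f$ being $f$), obtaining $\sum_{t=a}^{b-2}g(t)\sum_{s=\sigma(t)}^{b-1}(s+\mu-\sigma(t))^{(\mu-2)}f(s)$. The same $\Delta_t$-expansion applied to the right difference of Definition~\ref{def1} yields ${}_t\Delta_{\rho(b)}^\alpha f(t)=f(t)-\alpha f^\sigma(t)+\frac{\mu(\mu-1)}{\Gamma(\mu+1)}\sum_{s=\sigma(\sigma(t))}^{b-1}(s+\mu-\sigma(\sigma(t)))^{(\mu-2)}f(s)$. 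After the shift $t\mapsto t+1$ its double sum agrees with the interchanged sum above except at $t=a$ (the contribution at $t=b-1$ being an empty sum); isolating $t=a$ leaves the boundary term $\frac{\mu(\mu-1)}{\Gamma(\mu+1)}g(a)\sum_{s=\sigma(a)}^{b-1}(s+\mu-\sigma(a))^{(\mu-2)}f(s)$. The telescoping identity $(t+\mu-a)^{(\mu-1)}-(t+\mu-\sigma(a))^{(\mu-1)}=(\mu-1)(t+\mu-\sigma(a))^{(\mu-2)}$ then shows that this boundary term equals the last bracketed expression of the statement minus $\mu f(a)g(a)$.

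Finally I would reconcile the local terms. The two reindexings $\sum_{t=a}^{b-1}f(t)g^\sigma(t)=\sum_{t=a}^{b-2}f(t)g^\sigma(t)+f(b-1)g(b)$ and $\sum_{t=a}^{b-1}f(t)g(t)=\sum_{t=a}^{b-2}f^\sigma(t)g^\sigma(t)+f(a)g(a)$ reduce the local part of the left-hand side to $\sum_{t=a}^{b-2}\bigl(f(t)-\alpha f^\sigma(t)\bigr)g^\sigma(t)+f(b-1)g(b)-\alpha f(a)g(a)$; combining the term $-f(a)g(a)$ of the statement with the leftover $\mu f(a)g(a)$ produces the required $-\alpha f(a)g(a)$, and everything matches. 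The main obstacle is precisely this boundary bookkeeping: the left difference sums up to $t-1$ whereas the right difference starts at $\sigma(\sigma(t))$, creating an off-by-one both in the summation ranges and in the falling-factorial exponents, and the argument closes only once the telescoping identity collapses the difference of the two $(\mu-1)$-power sums in the statement onto the $(\mu-2)$-power term produced by the interchange.
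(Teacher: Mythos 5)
Your proof is correct --- I checked the explicit expansion of ${}_a\Delta_t^\alpha g$, the application of Lemma~\ref{lem0} to the $(\mu-2)$-power kernel, the index shift, and the telescoping step, and all of the boundary bookkeeping closes --- but it takes a genuinely different route from the paper's. The paper never opens up the kernels: it moves $\Delta$ across the left fractional sum via the commutation formula \eqref{seila0}, expands ${}_a\Delta_t^{-\mu}\Delta g$ using \eqref{seila1}, applies Lemma~\ref{lem0} with the $(\mu-1)$-power kernel paired against $\Delta g$ (not against $g$, as you do), reassembles the inner sum into the right fractional sum ${}_t\Delta_{\rho(b)}^{-\mu}f(t)$, and finishes with the classical Abel summation by parts, recognizing ${}_t\Delta_{\rho(b)}^{\alpha}f$ through Definition~\ref{def1}; there the boundary bracket of the statement arises from the commutation term of \eqref{seila0} together with the evaluation of $g(t)\,{}_t\Delta_{\rho(b)}^{-\mu}f(t)$ at $t=a$ and $t=b-1$. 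You instead reduce both fractional differences to closed ``local plus $(\mu-2)$-kernel'' form via the power rule $\Delta x^{(\mu-1)}=(\mu-1)x^{(\mu-2)}$ and the value $\mu^{(\mu-1)}=\Gamma(\mu+1)$ (your two formulas ${}_a\Delta_t^\alpha g(t)=g^\sigma(t)-\alpha g(t)+\cdots$ and ${}_t\Delta_{\rho(b)}^\alpha f(t)=f(t)-\alpha f^\sigma(t)+\cdots$ are both correct, as is the observation that $-1+\mu=-\alpha$ reconciles the $f(a)g(a)$ terms), then match the two sides term by term after the shift $t\mapsto t+1$. What the paper's operator-level argument buys is brevity and reuse of machinery proved anyway (Theorem~\ref{thm2}, its right-sided analogue \eqref{naosei12}, Abel summation); what your computation buys is independence from those commutation formulas and from summation by parts, plus a transparent explanation of why the statement's boundary term is naturally written as a difference of two $(\mu-1)$-power sums: under your telescoping identity it collapses to exactly the single $(\mu-2)$-power sum left over at $t=a$ together with the term $\mu f(a)g(a)$ that converts the statement's $-f(a)g(a)$ into the needed $-\alpha f(a)g(a)$. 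Both proofs share Lemma~\ref{lem0} as the sum-interchange device, but applied to different kernels and different second factors, so the overlap is structural rather than step-by-step.
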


\begin{proof}
From \eqref{seila0} we can write
\begin{align}
\sum_{t=a}^{b-1}f(t)_a\Delta_t^\alpha
g(t)&=\sum_{t=a}^{b-1}f(t)\Delta(_a\Delta_t^{-\mu} g(t))\nonumber\\
&=\sum_{t=a}^{b-1}f(t)\left[_a\Delta_t^{-\mu}\Delta
g(t)+\frac{\mu}{\Gamma(\mu+1)}(t+\mu-a)^{(\mu-1)}g(a)\right]\nonumber\\
&=\sum_{t=a}^{b-1}f(t)_a\Delta_t^{-\mu}\Delta
g(t)+\sum_{t=a}^{b-1}\frac{\mu}{\Gamma(\mu+1)}(t+\mu-a)^{(\mu-1)}f(t)g(a)\label{rui0}.
\end{align}
Using \eqref{seila1} we get
\begin{align*}
\sum_{t=a}^{b-1}&f(t)_a\Delta_t^{-\mu}\Delta
g(t)\\
&=\sum_{t=a}^{b-1}f(t)\Delta
g(t)+\frac{\mu}{\Gamma(\mu+1)}\sum_{t=a}^{b-1}f(t)\sum_{s=a}^{t-1}(t+\mu-\sigma(s))^{(\mu-1)}\Delta
g(s)\\
&=\sum_{t=a}^{b-1}f(t)\Delta
g(t)+\frac{\mu}{\Gamma(\mu+1)}\sum_{t=a}^{b-2}\Delta
g(t)\sum_{s=\sigma(t)}^{b-1}(s+\mu-\sigma(t))^{(\mu-1)}f(s)\\
&=f(b-1)[g(b)-g(b-1)]+\sum_{t=a}^{b-2}\Delta
g(t)_t\Delta_{\rho(b)}^{-\mu} f(t),
\end{align*}
where the third equality follows by Lemma~\ref{lem0}. We proceed
to develop the right hand side of the last equality as follows:
\begin{equation*}
\begin{split}
f&(b-1)[g(b)-g(b-1)]+\sum_{t=a}^{b-2}\Delta
g(t)_t\Delta_{\rho(b)}^{-\mu} f(t)\\
&=f(b-1)[g(b)-g(b-1)] +\left[g(t)_t\Delta_{\rho(b)}^{-\mu}
f(t)\right]_{t=a}^{t=b-1}-\sum_{t=a}^{b-2}
g^\sigma(t)\Delta(_t\Delta_{\rho(b)}^{-\mu} f(t))\\
&=f(b-1)g(b)-f(a)g(a)-\frac{\mu}{\Gamma(\mu+1)}g(a)\sum_{s
=\sigma(a)}^{b-1}(s+\mu-\sigma(a))^{(\mu-1)}f(s)\\
&\qquad +\sum_{t=a}^{b-2}{\left(_t\Delta_{\rho(b)}^\alpha
f(t)\right)g^\sigma(t)}\, ,
\end{split}
\end{equation*}
where the first equality follows from the usual summation by parts
formula. Putting this into \eqref{rui0}, we get:
\begin{multline*}
\sum_{t=a}^{b-1}f(t)_a\Delta_t^\alpha
g(t)=f(b-1)g(b)-f(a)g(a)+\sum_{t=a}^{b-2}{\left(_t\Delta_{\rho(b)}^\alpha
f(t)\right)g^\sigma(t)}\\
+\frac{g(a)\mu}{\Gamma(\mu+1)}\sum_{t=a}^{b-1}\frac{(t+\mu-a)^{(\mu-1)}}{\Gamma(\mu)}f(t)
-\frac{g(a)\mu}{\Gamma(\mu+1)}\sum_{s=\sigma(a)}^{b-1}(s+\mu-\sigma(a))^{(\mu-1)}f(s).
\end{multline*}
The theorem is proved.
\end{proof}

% -----------

\subsection{Necessary Optimality Conditions}

We begin to fix two arbitrary real numbers $\alpha$ and $\beta$
such that $\alpha,\beta\in(0,1]$. Further, we put $\mu=1-\alpha$
and $\nu=1-\beta$.

Let a function $L(t,u,v,w):\mathbb{T}^\kappa\times\mathbb{R}\times
\mathbb{R}\times\mathbb{R}\rightarrow\mathbb{R}$
be given. We assume that the second-order partial derivatives
$L_{uu}$, $L_{uv}$, $L_{uw}$, $L_{vw}$,
$L_{vv}$, and $L_{ww}$ exist and are continuous.

Consider the functional
$\mathcal{L}:\mathcal{F}\rightarrow\mathbb{R}$ defined by
\begin{equation}
\label{naosei7}
\mathcal{L}(y(\cdot))=\sum_{t=a}^{b-1}L(t,y^{\sigma}(t),{_a}\Delta_t^\alpha
y(t),{_t}\Delta_b^\beta y(t))
\end{equation}
and the problem, that we denote by (P), of minimizing
\eqref{naosei7} subject to the boundary conditions $y(a)=A$ and
$y(b)=B$ ($A,B\in\mathbb{R}$). Our aim is to derive necessary
conditions of first and second order for problem (P).
\begin{definition}
\label{def:norm}
For $f\in\mathcal{F}$ we define the norm
$$\|f\|=\max_{t\in\mathbb{T}^\kappa}|f^\sigma(t)|
+\max_{t\in\mathbb{T}^\kappa}|_a\Delta_t^\alpha
f(t)|+\max_{t\in\mathbb{T}^\kappa}|_t\Delta_b^\beta f(t)|.$$
A function $\tilde{y}\in\mathcal{F}$ with $\tilde{y}(a)=A$
and $\tilde{y}(b)=B$ is called a local minimizer for problem (P)
provided there exists $\delta>0$ such that
$\mathcal{L}(\tilde{y})\leq\mathcal{L}(y)$ for all $y\in\mathcal{F}$
with $y(a)=A$ and $y(b)=B$ and $\|y-\tilde{y}\|<\delta$.
\end{definition}

\begin{remark}
It is easy to see that Definition~\ref{def:norm}
gives a norm in $\mathcal{F}$. Indeed,
it is clear that $||f||$ is nonnegative,
and for an arbitrary $f\in\mathcal{F}$ and
$k\in\mathbb{R}$ we have $\|kf\|=|k|\|f\|$.
The triangle inequality is also easy to prove:
\begin{align*}
\|f+g\|&=\max_{t\in\mathbb{T}^\kappa}|f(t)+g(t)|
+\max_{t\in\mathbb{T}^\kappa}|_a\Delta_t^\alpha (f+g)(t)|
+\max_{t\in\mathbb{T}^\kappa}|_t\Delta_b^\alpha(f+g)(t)|\\
&\leq\max_{t\in\mathbb{T}^\kappa}\left[|f(t)|+|g(t)|\right]
+\max_{t\in\mathbb{T}^\kappa}\left[|_a\Delta_t^\alpha
f(t)|+|_a\Delta_t^\alpha
g(t)|\right]\\
& \qquad +\max_{t\in\mathbb{T}^\kappa}\left[|_t\Delta_b^\alpha
f(t)|+|_t\Delta_b^\alpha g(t)|\right]\\
&\leq\|f\|+\|g\|.
\end{align*}
The only possible doubt is
to prove that $||f|| = 0$ implies that $f(t) = 0$
for any $t \in \mathbb{T} = \{a, a+1,\ldots,b\}$.
Suppose $||f|| = 0$. It follows that
\begin{gather}
\max_{t\in\mathbb{T}^\kappa}|f^\sigma(t)|  = 0\, , \label{rr2:1}\\
\max_{t\in\mathbb{T}^\kappa}|_a\Delta_t^\alpha f(t)| = 0\, ,\label{rr2:2}\\
\max_{t\in\mathbb{T}^\kappa}|_t\Delta_b^\beta f(t)| = 0 \, . \label{rr2:3}
\end{gather}
From \eqref{rr2:1} we conclude that
$f(t) = 0$ for all  $t \in \{a+1,\ldots,b\}$.
It remains to prove that $f(a) = 0$.
To prove this we use \eqref{rr2:2} (or \eqref{rr2:3}).
Indeed, from \eqref{rr2:1} we can write
\begin{equation*}
\begin{split}
_a\Delta_t^\alpha f(t)
&=  \Delta\left(\frac{1}{\Gamma(1-\alpha)}\sum_{s=a}^{t}(t+1-\alpha-\sigma(s))^{(-\alpha)} f(s) \right)\\
&= \frac{1}{\Gamma(1-\alpha)}\left(\sum_{s=a}^{t+1}(t+2 - \alpha-\sigma(s))^{(-\alpha)} f(s)
- \sum_{s=a}^{t}(t+1-\alpha-\sigma(s))^{(-\alpha)} f(s) \right)\\
&= \frac{1}{\Gamma(1-\alpha)} \left( (t+2 - \alpha-\sigma(a))^{(-\alpha)} f(a)
- (t+1 - \alpha-\sigma(a))^{(-\alpha)} f(a)\right)\\
&= \frac{f(a)}{\Gamma(1-\alpha)} \Delta (t+1 - \alpha-\sigma(a))^{(-\alpha)}
\end{split}
\end{equation*}
and since by \eqref{rr2:2}
$_a\Delta_t^\alpha f(t) = 0$, one concludes that
$f(a) = 0$ (because $(t+1 - \alpha-\sigma(a))^{(-\alpha)}$
is not a constant).
\end{remark}

\begin{definition}
A function $\eta\in\mathcal{F}$ is called an admissible variation
for problem (P) provided $\eta\neq 0$ and $\eta(a)=\eta(b)=0$.
\end{definition}

The next theorem presents a first order necessary condition for
problem (P).

\begin{theorem}[The fractional discrete-time Euler--Lagrange equation]
\label{thm0}
If $\tilde{y}\in\mathcal{F}$ is a local minimizer
for problem (P), then
\begin{equation}\label{EL}
L_u[\tilde{y}](t) +{_t}\Delta_{\rho(b)}^\alpha L_v[\tilde{y}](t)+{_a}\Delta_t^\beta
L_w[\tilde{y}](t)=0
\end{equation}
holds for all $t\in\mathbb{T}^{\kappa^2}$,
where the operator $[\cdot]$ is defined by
$$
[y](s) =(s,y^{\sigma}(s),{_a}\Delta_s^\alpha
y(s),{_s}\Delta_b^\beta y(s)) .
$$
\end{theorem}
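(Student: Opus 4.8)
The plan is to run the classical first--variation argument of the calculus of variations, adapted to the discrete fractional setting, and to close it with the fundamental lemma. Fix an admissible variation $\eta$ (so $\eta(a)=\eta(b)=0$) and consider the one--parameter family $\tilde{y}+\ep\eta$, which satisfies the boundary conditions for every $\ep$. Because the fractional differences ${_a}\Delta_t^\alpha$ and ${_t}\Delta_b^\beta$ are linear operators and the sum defining $\mathcal{L}$ is finite, the real function $\phi(\ep)=\mathcal{L}(\tilde{y}+\ep\eta)$ is differentiable; since $\tilde{y}$ is a local minimizer, $\phi$ has a minimum at $\ep=0$, whence $\phi'(0)=0$. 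Differentiating under the finite summation and using the chain rule yields
$$\phi'(0)=\sum_{t=a}^{b-1}\Big[L_u[\tilde{y}](t)\,\eta^\sigma(t)+L_v[\tilde{y}](t)\,{_a}\Delta_t^\alpha\eta(t)+L_w[\tilde{y}](t)\,{_t}\Delta_b^\beta\eta(t)\Big]=0.$$

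Next I would strip the fractional differences off $\eta$ and shift them onto the coefficients $L_v$ and $L_w$. To the middle term I apply the fractional summation by parts of Theorem~\ref{teor1} with $f=L_v[\tilde{y}]$ and $g=\eta$; since $\eta(a)=\eta(b)=0$, the boundary contributions $f(b-1)g(b)$ and $f(a)g(a)$ and the two $g(a)$--sums all vanish, leaving exactly $\sum_{t=a}^{b-2}{_t}\Delta_{\rho(b)}^\alpha L_v[\tilde{y}](t)\,\eta^\sigma(t)$. The third term requires the right--sided mirror of Theorem~\ref{teor1}, namely a formula trading the right fractional difference ${_t}\Delta_b^\beta$ on $g$ for the left fractional difference ${_a}\Delta_t^\beta$ on $f$; this is obtained by the same scheme, using the right--sided identity \eqref{naosei12} in place of \eqref{seila0} together with an analogue of Lemma~\ref{lem0}. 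Again the boundary terms drop because $\eta$ vanishes at the endpoints, producing $\sum_{t=a}^{b-2}{_a}\Delta_t^\beta L_w[\tilde{y}](t)\,\eta^\sigma(t)$.

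Collecting the three contributions, and noting that the $t=b-1$ term of the $L_u$--sum vanishes because $\eta^\sigma(b-1)=\eta(b)=0$, the condition $\phi'(0)=0$ becomes
$$\sum_{t=a}^{b-2}\Big[L_u[\tilde{y}](t)+{_t}\Delta_{\rho(b)}^\alpha L_v[\tilde{y}](t)+{_a}\Delta_t^\beta L_w[\tilde{y}](t)\Big]\eta^\sigma(t)=0,$$
valid for every admissible $\eta$. Finally I invoke the discrete fundamental lemma: as $t$ runs over $\mathbb{T}^{\kappa^2}=\{a,\dots,b-2\}$, the shifted values $\eta^\sigma(t)=\eta(t+1)$ sweep precisely the free interior nodes $\{a+1,\dots,b-1\}$, so choosing $\eta$ supported at a single such node forces the bracketed expression to vanish at each $t\in\mathbb{T}^{\kappa^2}$, which is exactly \eqref{EL}.

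The differentiation of $\phi$ and the bookkeeping of vanishing boundary terms are routine. The genuine obstacle is the third term: Theorem~\ref{teor1} is stated only for the left difference, so the proof hinges on having (or first establishing) its right--sided counterpart and on verifying that its boundary terms likewise vanish under $\eta(a)=\eta(b)=0$. Some care is also needed to align the summation range $\{a,\dots,b-2\}$ with the shift $\eta^\sigma$ so that the fundamental lemma applies cleanly on $\mathbb{T}^{\kappa^2}$.
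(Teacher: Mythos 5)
Your proposal is correct and takes essentially the same route as the paper: the first variation $\Phi'(0)=0$, Theorem~\ref{teor1} applied to the $L_v$ term with all boundary contributions killed by $\eta(a)=\eta(b)=0$, and for the $L_w$ term the right-sided identity \eqref{naosei12} combined with the summation-interchange lemma and ordinary summation by parts, finishing with the discrete fundamental lemma on $\mathbb{T}^{\kappa^2}$. The only cosmetic difference is packaging: the paper never states a standalone right-sided mirror of Theorem~\ref{teor1} but carries out exactly the computation you outline inline, and it uses Lemma~\ref{lem0} itself (read in the reverse direction) rather than a separate analogue.
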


\begin{proof}
Suppose that $\tilde{y}(\cdot)$ is a local minimizer of
$\mathcal{L}(\cdot)$. Let $\eta(\cdot)$ be an arbitrary fixed
admissible variation and define the function
$\Phi:\left(-\frac{\delta}{\|\eta(\cdot)\|},
\frac{\delta}{\|\eta(\cdot)\|}\right)\rightarrow\mathbb{R}$
by
\begin{equation}
\label{fi}
\Phi(\varepsilon)=\mathcal{L}(\tilde{y}(\cdot)+\varepsilon\eta(\cdot)).
\end{equation}
This function has a minimum at $\varepsilon=0$, so we must
have $\Phi'(0)=0$, \textrm{i.e.},
$$\sum_{t=a}^{b-1}\left[L_u[\tilde{y}](t)\eta^\sigma(t)
+L_v[\tilde{y}](t){_a}\Delta_t^\alpha\eta(t)
+L_w[\tilde{y}](t){_t}\Delta_b^\beta\eta(t)\right]=0,$$ which we may
write, equivalently, as
\begin{multline}
\label{rui3}
L_u[\tilde{y}](t)\eta^\sigma(t)|_{t=\rho(b)}
+\sum_{t=a}^{b-2}L_u[\tilde{y}](t)\eta^\sigma(t)\\
+\sum_{t=a}^{b-1}L_v[\tilde{y}](t){_a}\Delta_t^\alpha\eta(t)
+\sum_{t=a}^{b-1}L_w[\tilde{y}](t){_t}\Delta_b^\beta\eta(t)=0.
\end{multline}
Using Theorem~\ref{teor1}, and the fact that $\eta(a)=\eta(b)=0$,
we get for the third term in \eqref{rui3} that
\begin{equation}
\label{naosei5}
\sum_{t=a}^{b-1}L_v[\tilde{y}](t){_a}\Delta_t^\alpha\eta(t)
=\sum_{t=a}^{b-2}\left({_t}\Delta_{\rho(b)}^\alpha
L_v[\tilde{y}](t)\right)\eta^\sigma(t).
\end{equation}
Using \eqref{naosei12} it follows that
\begin{equation}
\label{naosei4}
\begin{aligned}
\sum_{t=a}^{b-1}&L_w[\tilde{y}](t){_t}\Delta_b^\beta\eta(t)\\
&=-\sum_{t=a}^{b-1}L_w[\tilde{y}](t)\Delta({_t}\Delta_b^{-\nu}\eta(t))\\
&=-\sum_{t=a}^{b-1}L_w[\tilde{y}](t)\left[{_t}\Delta_{\rho(b)}^{-\nu}\Delta
\eta(t)-\frac{\nu}{\Gamma(\nu+1)}(b+\nu-\sigma(t))^{(\nu-1)}\eta(b)\right]\\
&=-\left(\sum_{t=a}^{b-1}L_w[\tilde{y}](t){_t}\Delta_{\rho(b)}^{-\nu}\Delta
\eta(t)-\frac{\nu\eta(b)}{\Gamma(\nu+1)}\sum_{t=a}^{b-1}(b
+\nu-\sigma(t))^{(\nu-1)}L_w[\tilde{y}](t)\right).
\end{aligned}
\end{equation}
We now use Lemma~\ref{lem0} to get
\begin{equation}
\label{naosei2}
\begin{split}
\sum_{t=a}^{b-1} & L_w[\tilde{y}](t){_t}\Delta_{\rho(b)}^{-\nu}\Delta
\eta(t)\\
&=\sum_{t=a}^{b-1}L_w[\tilde{y}](t)\Delta\eta(t)+\frac{\nu}{\Gamma(\nu+1)}\sum_{t=a}^{b-2}
L_w[\tilde{y}](t)\sum_{s=\sigma(t)}^{b-1}(s+\nu-\sigma(t))^{(\nu-1)}
\Delta\eta(s)\\
&=\sum_{t=a}^{b-1}L_w[\tilde{y}](t)\Delta\eta(t)
+\frac{\nu}{\Gamma(\nu+1)}\sum_{t=a}^{b-1}\Delta\eta(t)\sum_{s=a}^{t
-1}(t+\nu-\sigma(s))^{(\nu-1)}L_w[\tilde{y}](s)\\
&=\sum_{t=a}^{b-1}\Delta\eta(t){_a}\Delta^{-\nu}_t
L_w[\tilde{y}](t).
\end{split}
\end{equation}
We apply again the usual summation by parts formula,
this time to \eqref{naosei2}, to obtain:
\begin{equation}
\label{naosei3}
\begin{split}
\sum_{t=a}^{b-1} & \Delta\eta(t){_a}\Delta^{-\nu}_t
L_w[\tilde{y}](t)\\
&=\sum_{t=a}^{b-2}\Delta\eta(t){_a}\Delta^{-\nu}_t
L_w[\tilde{y}](t)+(\eta(b)-\eta(\rho(b))){_a}\Delta^{-\nu}_t
L_w[\tilde{y}](t)|_{t=\rho(b)}\\
&=\left[\eta(t){_a}\Delta^{-\nu}_t
L_w[\tilde{y}](t)\right]_{t=a}^{t=b-1}-\sum_{t=a}^{b-2}\eta^\sigma(t)\Delta({_a}\Delta^{-\nu}_t
L_w[\tilde{y}](t))\\
&\qquad +\eta(b){_a}\Delta^{-\nu}_t
L_w[\tilde{y}](t)|_{t=\rho(b)}-\eta(b-1){_a}\Delta^{-\nu}_t
L_w[\tilde{y}](t)|_{t=\rho(b)}\\
&=\eta(b){_a}\Delta^{-\nu}_t
L_w[\tilde{y}](t)|_{t=\rho(b)}-\eta(a){_a}\Delta^{-\nu}_t
L_w[\tilde{y}](t)|_{t=a}-\sum_{t=a}^{b-2}\eta^\sigma(t){_a}\Delta^{\beta}_t
L_w[\tilde{y}](t).
\end{split}
\end{equation}
Since $\eta(a)=\eta(b)=0$ it follows, from \eqref{naosei2} and
\eqref{naosei3}, that
$$
\sum_{t=a}^{b-1}L_w[\tilde{y}](t){_t}\Delta_{\rho(b)}^{-\nu}\Delta
\eta(t)=-\sum_{t=a}^{b-2}\eta^\sigma(t){_a}\Delta^{\beta}_t
L_w[\tilde{y}](t)
$$
and, after inserting in \eqref{naosei4}, that
\begin{equation}\label{naosei6}
\sum_{t=a}^{b-1}L_w[\tilde{y}](t){_t}\Delta_b^\beta\eta(t)
=\sum_{t=a}^{b-2}\eta^\sigma(t){_a}\Delta^{\beta}_t
L_w[\tilde{y}](t).
\end{equation}
By \eqref{naosei5} and \eqref{naosei6}
we may write \eqref{rui3} as
$$
\sum_{t=a}^{b-2}\left[L_u[\tilde{y}](t)
+{_t}\Delta_{\rho(b)}^\alpha L_v[\tilde{y}](t)+{_a}\Delta_t^\beta
L_w[\tilde{y}](t)\right]\eta^\sigma(t)=0.
$$
Since the values of $\eta^\sigma(t)$ are arbitrary for
$t\in\mathbb{T}^{\kappa^2}$, the Euler--Lagrange equation
\eqref{EL} holds along $\tilde{y}$.
\end{proof}

\begin{remark}
If the initial condition $y(a)=A$ is not present (\textrm{i.e.}, $y(a)$ is free),
we can use standard techniques to show that the following
supplementary condition must be fulfilled:
\begin{multline}\label{rui1}
-L_v(a)+\frac{\mu}{\Gamma(\mu+1)}\left(\sum_{t=a}^{b-1}(t+\mu-a)^{(\mu-1)}L_v[\tilde{y}](t)\right.\\
\left.-\sum_{t=\sigma(a)}^{b-1}(t+\mu-\sigma(a))^{(\mu-1)}L_v[\tilde{y}](t)\right)+
L_w(a)=0.
\end{multline}
Similarly, if $y(b)=B$ is not present (\textrm{i.e.}, $y(b)$ is free), the
equality
\begin{multline}\label{rui2}
L_u(\rho(b))+L_v(\rho(b))-L_w(\rho(b))\\
+\frac{\nu}{\Gamma(\nu+1)}\Biggl(\sum_{t=a}^{b-1}(b+\nu-\sigma(t))^{(\nu-1)}L_w[\tilde{y}](t)\\
-\sum_{t=a}^{b-2}(\rho(b)+\nu-\sigma(t))^{(\nu-1)}L_w[\tilde{y}](t)\Biggr)=0
\end{multline}
holds. We just note that the first term in \eqref{rui2} arises
from the first term on the left hand side of \eqref{rui3}.
Equalities \eqref{rui1} and \eqref{rui2} are the fractional
discrete-time \emph{natural boundary conditions}.
\end{remark}
The next result is a particular case of our Theorem~\ref{thm0}.

\begin{corollary}[The discrete-time Euler--Lagrange equation
-- \textrm{cf.}, \textrm{e.g.}, \cite{CD:Bohner:2004,RD}]
If $\tilde{y}$ is a solution to the problem
\begin{equation}
\label{eq:BPCV:DT}
\begin{gathered}
\mathcal{L}(y(\cdot))=\sum_{t=a}^{b-1}L(t,y(t+1),\Delta y(t)) \longrightarrow \min \\
y(a)=A \, , \quad y(b)=B \, ,
\end{gathered}
\end{equation}
then
$L_u(t,\tilde{y}(t+1),\Delta\tilde{y}(t))
-\Delta L_v(t,\tilde{y}(t+1),\Delta\tilde{y}(t))=0$
for all $t \in \{ a, \ldots, b-2\}$.
\end{corollary}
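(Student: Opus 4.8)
The plan is to recognize problem \eqref{eq:BPCV:DT} as the special instance of problem (P) in which the order of differentiation is an integer and the Lagrangian does not involve a right fractional difference, and then simply to read off the conclusion of Theorem~\ref{thm0}. Concretely, I would set $\alpha=1$, so that $\mu=1-\alpha=0$, and I would take a Lagrangian $L(t,u,v,w)$ independent of its fourth argument $w$ (equivalently, $L_w\equiv 0$). With this choice the functional \eqref{naosei7} becomes $\sum_{t=a}^{b-1}L(t,y^\sigma(t),{_a}\Delta_t^1 y(t))$, and since by \eqref{naosei10} we have ${_a}\Delta_t^0 y(t)=y(t)$, Definition~\ref{def1} gives ${_a}\Delta_t^1 y(t)=\Delta({_a}\Delta_t^0 y(t))=\Delta y(t)$. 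Thus \eqref{naosei7} coincides with the functional in \eqref{eq:BPCV:DT}, with $u=y(t+1)=y^\sigma(t)$ and $v=\Delta y(t)$.

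The next step is to check that Theorem~\ref{thm0} is applicable. A solution $\tilde y$ of \eqref{eq:BPCV:DT} is a local minimizer of $\mathcal L$ for this $L$; because the norm of Definition~\ref{def:norm} dominates the ordinary discrete norm, every $\|\cdot\|$-ball is contained in a ball of the ordinary norm, so local minimality of $\tilde y$ in the ordinary sense forces local minimality in the sense of Definition~\ref{def:norm}. Hence the Euler--Lagrange equation \eqref{EL} holds along $\tilde y$ for every $t\in\mathbb{T}^{\kappa^2}=\{a,\ldots,b-2\}$.

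It then remains to simplify \eqref{EL}. The third summand ${_a}\Delta_t^\beta L_w[\tilde y](t)$ vanishes identically since $L_w\equiv 0$, for any admissible $\beta$. For the middle summand I would again invoke Definition~\ref{def1} with $\mu=0$: using \eqref{naosei10}, ${_t}\Delta_{\rho(b)}^1 f(t)=-\Delta({_t}\Delta_{\rho(b)}^0 f(t))=-\Delta f(t)$, applied to $f=L_v[\tilde y]$. Finally, with $\alpha=1$ the operator $[\tilde y](s)$ collapses to $(s,\tilde y(s+1),\Delta\tilde y(s))$. Substituting these reductions into \eqref{EL} yields $L_u(t,\tilde y(t+1),\Delta\tilde y(t))-\Delta L_v(t,\tilde y(t+1),\Delta\tilde y(t))=0$ on $\{a,\ldots,b-2\}$, which is the asserted classical discrete-time Euler--Lagrange equation.

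The computation is entirely a substitution once $\alpha=1$ is fixed, so I expect no serious obstacle; the only points demanding care are verifying that the convention \eqref{naosei10} together with Definition~\ref{def1} genuinely extends the fractional operators to the integer value $\mu=0$ (so that both ${_a}\Delta_t^1$ and ${_t}\Delta_{\rho(b)}^1$ reduce to $\pm\Delta$, the subscript $\rho(b)$ being immaterial when the fractional sum degenerates to the identity), and justifying the passage from ``solution of \eqref{eq:BPCV:DT}'' to ``local minimizer of (P) in the norm of Definition~\ref{def:norm}'' so that Theorem~\ref{thm0} may be invoked verbatim rather than re-deriving the first variation.
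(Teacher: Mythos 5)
Your proposal is correct and takes essentially the same route as the paper, whose entire proof is the one-line reduction ``take $\alpha=1$ and $L$ not depending on $w$ in Theorem~\ref{thm0}.'' The details you supply---that \eqref{naosei10} together with Definition~\ref{def1} yield ${_a}\Delta_t^{1}=\Delta$ and ${_t}\Delta_{\rho(b)}^{1}=-\Delta$ so that \eqref{EL} collapses to $L_u-\Delta L_v=0$, and that local minimality in the ordinary discrete norm implies local minimality in the norm of Definition~\ref{def:norm}---are precisely the verifications the paper leaves implicit.
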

\begin{proof}
Follows from Theorem~\ref{thm0} with $\alpha=1$
and a $L$ not depending on $w$.
\end{proof}

We derive now the second order necessary condition for problem
(P), \textrm{i.e.}, we obtain Legendre's necessary condition for the
fractional difference setting.
\begin{theorem}[The fractional discrete-time Legendre condition]
\label{thm1}
If $\tilde{y}\in\mathcal{F}$ is a local minimizer for problem
(P), then the inequality
\begin{multline*}
L_{uu}[\tilde{y}](t)+2L_{uv}[\tilde{y}](t)
+L_{vv}[\tilde{y}](t)+L_{vv}[\tilde{y}](\sigma(t))(\mu-1)^2\\
+\sum_{s=\sigma(\sigma(t))}^{b-1}L_{vv}[\tilde{y}](s)\left(
\frac{\mu(\mu-1)\prod_{i=0}^{s-t-3}(\mu+i+1)}{(s-t)\Gamma(s-t)}\right)^2
+2L_{uw}[\tilde{y}](t)(\nu-1)\\
+2(\nu-1)L_{vw}[\tilde{y}](t)
+2(\mu-1)L_{vw}[\tilde{y}](\sigma(t))+L_{ww}[\tilde{y}](t)(1-\nu)^2\\
+L_{ww}[\tilde{y}](\sigma(t))
+\sum_{s=a}^{t-1}L_{ww}[\tilde{y}](s)\left(\frac{\nu(1-\nu)\prod_{i=0}^{t
-s-2}(\nu+i)}{(\sigma(t)-s)\Gamma(\sigma(t)-s)}\right)^2
\geq 0
\end{multline*}
holds for all $t\in\mathbb{T}^{\kappa^2}$, where
$[\tilde{y}](t)=(t,\tilde{y}^{\sigma}(t),{_a}\Delta_t^\alpha
\tilde{y}(t),{_t}\Delta_b^\beta\tilde{y}(t))$.
\end{theorem}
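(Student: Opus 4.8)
The plan is to read the inequality off the second-order necessary condition $\Phi''(0)\geq 0$, in exact parallel with the way Theorem~\ref{thm0} was extracted from $\Phi'(0)=0$. Fix an admissible variation $\eta$ and keep $\Phi(\varepsilon)=\mathcal{L}(\tilde{y}(\cdot)+\varepsilon\eta(\cdot))$ as in \eqref{fi}. Since $L$ has continuous second-order partial derivatives, $\Phi$ is of class $C^2$ on a neighbourhood of $0$; as $\tilde{y}$ is a local minimizer, $\Phi$ attains a minimum at the interior point $\varepsilon=0$, whence $\Phi''(0)\geq 0$. Differentiating the finite sum \eqref{naosei7} twice and putting $\varepsilon=0$ presents the second variation as the quadratic form
\begin{multline*}
\Phi''(0)=\sum_{r=a}^{b-1}\Bigl[L_{uu}[\tilde{y}](r)(\eta^\sigma(r))^2
+2L_{uv}[\tilde{y}](r)\eta^\sigma(r)\,{_a}\Delta_r^\alpha\eta(r)
+2L_{uw}[\tilde{y}](r)\eta^\sigma(r)\,{_r}\Delta_b^\beta\eta(r)\\
+L_{vv}[\tilde{y}](r)({_a}\Delta_r^\alpha\eta(r))^2
+2L_{vw}[\tilde{y}](r)\,{_a}\Delta_r^\alpha\eta(r)\,{_r}\Delta_b^\beta\eta(r)
+L_{ww}[\tilde{y}](r)({_r}\Delta_b^\beta\eta(r))^2\Bigr].
\end{multline*}

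The decisive step is to specialize $\eta$ to a variation supported at a single interior node. Fix an arbitrary $t\in\mathbb{T}^{\kappa^2}$ and take $\eta$ with $\eta(\sigma(t))\neq 0$ and $\eta(s)=0$ for every other $s\in\mathbb{T}$; as $\sigma(t)\in\{a+1,\dots,b-1\}$, this $\eta$ satisfies $\eta(a)=\eta(b)=0$ and is admissible. Now $\eta^\sigma(r)$ is nonzero only at $r=t$, so the whole form collapses to a multiple of $(\eta(\sigma(t)))^2$. It remains to evaluate the two fractional differences of this ``bump''. Using the explicit expansions \eqref{seila1}, Definition~\ref{def1}, the factorial convention $x^{(y)}=\Gamma(x+1)/\Gamma(x+1-y)$, and \eqref{naosei9}, a direct computation gives that ${_a}\Delta_r^\alpha\eta(r)$ vanishes for $r<t$ and equals, respectively, $\eta(\sigma(t))$ at $r=t$, $(\mu-1)\eta(\sigma(t))$ at $r=\sigma(t)$, and the displayed product/Gamma coefficient times $\eta(\sigma(t))$ for $r\geq\sigma(\sigma(t))$; symmetrically ${_r}\Delta_b^\beta\eta(r)$ vanishes for $r>\sigma(t)$ and equals $\eta(\sigma(t))$ at $r=\sigma(t)$, $(\nu-1)\eta(\sigma(t))$ at $r=t$, and the corresponding right coefficient times $\eta(\sigma(t))$ for $r\leq t-1$.

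Substituting these node values, every surviving summand of $\Phi''(0)$ carries the common factor $(\eta(\sigma(t)))^2$. Collecting them reproduces the bracketed expression of the theorem term by term: the diagonal $L_{uu},L_{vv},L_{ww}$ entries and the three mixed $L_{uv},L_{uw},L_{vw}$ entries come from the nodes $r\in\{t,\sigma(t)\}$, while the two sums $\sum_{s=\sigma(\sigma(t))}^{b-1}L_{vv}$ and $\sum_{s=a}^{t-1}L_{ww}$ come from the two tails of the fractional differences. Since $\eta(\sigma(t))$ is an arbitrary nonzero real, dividing the inequality $\Phi''(0)\geq 0$ by $(\eta(\sigma(t)))^2>0$ yields the asserted inequality at this $t$; as $t\in\mathbb{T}^{\kappa^2}$ was arbitrary, the proof is complete. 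The hard part will be the bookkeeping in the middle step: one must evaluate ${_a}\Delta_r^\alpha\eta$ and ${_r}\Delta_b^\beta\eta$ node by node, peeling off the two nodes adjacent to $\sigma(t)$ (which produce the isolated $(\mu-1)$ and $(\nu-1)$ factors) from the tail nodes, and then compress the resulting ratios of Gamma functions into the displayed products via \eqref{naosei9}; the reindexing of the two tails is exactly where off-by-one and sign slips are easiest to make.
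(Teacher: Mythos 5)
Your proposal is correct and follows essentially the same route as the paper's own proof: expand $\Phi''(0)\geq 0$ into the quadratic form, insert the single-node bump $\eta$ supported at $\sigma(t)$, and evaluate the two fractional differences node by node — your claimed values (namely $1$ at $r=t$, $\mu-1$ at $r=\sigma(t)$, and the gamma-product tail for the left difference; $\nu-1$ at $r=t$, $1$ at $r=\sigma(t)$, and the corresponding tail for the right) all agree with the paper's computation, in particular with \eqref{rui10} and its right-hand analogue. The only cosmetic difference is that you take an arbitrary nonzero bump value and divide by its square, whereas the paper sets $\eta(\sigma(\tau))=1$ outright, which is equivalent by homogeneity of the second variation.
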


\begin{proof}
By the hypothesis of the theorem, and letting $\Phi$ be as in
\eqref{fi}, we get
\begin{equation}
\label{des0}
\Phi''(0)\geq 0
\end{equation}
for an arbitrary admissible variation $\eta(\cdot)$.
Inequality \eqref{des0} is equivalent to
\begin{multline*}
\sum_{t=a}^{b-1}\left[L_{uu}[\tilde{y}](t)(\eta^\sigma(t))^2
+2L_{uv}[\tilde{y}](t)\eta^\sigma(t){_a}\Delta_t^\alpha\eta(t)
+L_{vv}[\tilde{y}](t)({_a}\Delta_t^\alpha\eta(t))^2\right.\\
\left.+2L_{uw}[\tilde{y}](t)\eta^\sigma(t){_t}\Delta_b^\beta\eta(t)
+2L_{vw}[\tilde{y}](t){_a}\Delta_t^\alpha\eta(t){_t}\Delta_b^\beta\eta(t)
+L_{ww}[\tilde{y}](t)({_t}\Delta_b^\beta\eta(t))^2\right]\geq 0.
\end{multline*}
Let $\tau\in\mathbb{T}^{\kappa^2}$ be arbitrary and define
$\eta:\mathbb{T}\rightarrow\mathbb{R}$ by
\[ \eta(t) = \left\{ \begin{array}{ll}
1 & \mbox{if $t=\sigma(\tau)$};\\
0 & \mbox{otherwise}.\end{array} \right. \]
It follows that
$\eta(a)=\eta(b)=0$, \textrm{i.e.},
$\eta$ is an admissible variation.
Using \eqref{seila0} (note that $\eta(a)=0$), we get
\begin{equation*}
\begin{split}
\sum_{t=a}^{b-1}&\left[L_{uu}[\tilde{y}](t)(\eta^\sigma(t))^2
+2L_{uv}[\tilde{y}](t)\eta^\sigma(t){_a}\Delta_t^\alpha\eta(t)
+L_{vv}[\tilde{y}](t)({_a}\Delta_t^\alpha\eta(t))^2\right]\\
&=\sum_{t=a}^{b-1}\left\{L_{uu}[\tilde{y}](t)(\eta^\sigma(t))^2\right.\\
&\qquad \left. +2L_{uv}[\tilde{y}](t)\eta^\sigma(t)\left[\Delta\eta(t)
+\frac{\mu}{\Gamma(\mu+1)}\sum_{s=a}^{t-1}(t
+\mu-\sigma(s))^{(\mu-1)}\Delta\eta(s)\right]\right.\\
&\qquad \left. +L_{vv}[\tilde{y}](t)\left(\Delta\eta(t)
+\frac{\mu}{\Gamma(\mu+1)}\sum_{s=a}^{t-1}(t
+\mu-\sigma(s))^{(\mu-1)}\Delta\eta(s)\right)^2\right\}\\
&= L_{uu}[\tilde{y}](\tau)+2L_{uv}[\tilde{y}](\tau)+L_{vv}[\tilde{y}](\tau)\\
&\qquad +\sum_{t=\sigma(\tau)}^{b-1}L_{vv}[\tilde{y}](t)\left(\Delta\eta(t)
+\frac{\mu}{\Gamma(\mu+1)}\sum_{s=a}^{t-1}(t+\mu-\sigma(s))^{(\mu-1)}\Delta\eta(s)\right)^2.
\end{split}
\end{equation*}
Observe that
\begin{multline*}
\sum_{t=\sigma(\sigma(\tau))}^{b
-1}L_{vv}[\tilde{y}](t)\left(\frac{\mu}{\Gamma(\mu+1)}\sum_{s=a}^{t-1}(t
+\mu-\sigma(s))^{(\mu-1)}\Delta\eta(s)\right)^2
+ L_{vv}(\sigma(\tau))(-1+\mu)^2 \\
= \sum_{t=\sigma(\tau)}^{b-1}L_{vv}[\tilde{y}](t)\left(\Delta\eta(t)
+\frac{\mu}{\Gamma(\mu+1)}\sum_{s=a}^{t-1}(t+\mu-\sigma(s))^{(\mu-1)}\Delta\eta(s)\right)^2
\, .
\end{multline*}
We show next that
\begin{multline*}
\sum_{t=\sigma(\sigma(\tau))}^{b-1}L_{vv}[\tilde{y}](t)\left(\frac{\mu}{\Gamma(\mu
+1)}\sum_{s=a}^{t-1}(t+\mu-\sigma(s))^{(\mu-1)}\Delta\eta(s)\right)^2\\
=\sum_{t=\sigma(\sigma(\tau))}^{b-1}L_{vv}[\tilde{y}](t)\left(\frac{\mu(\mu-1)\prod_{i=0}^{t
-\tau-3}(\mu+i+1)}{(t-\tau)\Gamma(t-\tau)}\right)^2.
\end{multline*}
Let $t\in[\sigma(\sigma(\tau)),b-1]\cap\mathbb{Z}$. Then,
\begin{equation*}
\begin{split}
&\frac{\mu}{\Gamma(\mu+1)}\sum_{s=a}^{t-1}(t+\mu-\sigma(s))^{(\mu-1)}\Delta\eta(s)\\
&\quad =\frac{\mu}{\Gamma(\mu+1)}\left[\sum_{s=a}^{\tau}(t+\mu-\sigma(s))^{(\mu-1)}\Delta\eta(s)
+\sum_{s=\sigma(\tau)}^{t-1}(t+\mu-\sigma(s))^{(\mu-1)}\Delta\eta(s)\right]\\
&\quad =\frac{\mu}{\Gamma(\mu+1)}\left[(t+\mu-\sigma(\tau))^{(\mu-1)}-(t+\mu-\sigma(\sigma(\tau)))^{(\mu-1)}\right]\\
&\quad=\frac{\mu}{\Gamma(\mu+1)}\left[\frac{\Gamma(t+\mu-\sigma(\tau)+1)}{\Gamma(t
+\mu-\sigma(\tau)+1-(\mu-1))}-\frac{\Gamma(t+\mu-\sigma(\sigma(\tau))
+1)}{\Gamma(t+\mu-\sigma(\sigma(\tau))+1-(\mu-1))}\right]
\end{split}
\end{equation*}
\begin{equation}
\label{rui10}
\begin{split}
&\quad=\frac{\mu}{\Gamma(\mu+1)}\left[\frac{\Gamma(t+\mu-\tau)}{\Gamma(t-\tau+1)}
-\frac{\Gamma(t-\tau+\mu-1)}{\Gamma(t-\tau)}\right]\\
&\quad=\frac{\mu}{\Gamma(\mu+1)}\left[\frac{(t+\mu-\tau-1)\Gamma(t+\mu-\tau-1)}{(t
-\tau)\Gamma(t-\tau)}-\frac{(t-\tau)\Gamma(t-\tau+\mu-1)}{(t-\tau)\Gamma(t-\tau)}\right]\\
&\quad=\frac{\mu}{\Gamma(\mu+1)}\frac{(\mu-1)\Gamma(t-\tau+\mu-1)}{(t-\tau)\Gamma(t-\tau)}\\
&\quad=\frac{\mu(\mu-1)\prod_{i=0}^{t-\tau-3}(\mu+i+1)}{(t-\tau)\Gamma(t-\tau)},
\end{split}
\end{equation}
which proves our claim. Observe that we can write
${_t}\Delta_b^\beta\eta(t)=-{_t}\Delta_{\rho(b)}^{-\nu}\Delta
\eta(t)$ since $\eta(b)=0$. It is not difficult to see that the following equality holds:
$$\sum_{t=a}^{b-1}2L_{uw}[\tilde{y}](t)\eta^\sigma(t){_t}\Delta_b^\beta\eta(t)
=-\sum_{t=a}^{b-1}2L_{uw}[\tilde{y}](t)\eta^\sigma(t){_t}\Delta_{\rho(b)}^{-\nu}\Delta
\eta(t)=2L_{uw}[\tilde{y}](\tau)(\nu-1).$$ Moreover,
\begin{equation*}
\begin{split}
\sum_{t=a}^{b-1} & 2L_{vw}[\tilde{y}](t){_a}\Delta_t^\alpha\eta(t){_t}\Delta_b^\beta\eta(t)\\
&=-2\sum_{t=a}^{b-1}L_{vw}[\tilde{y}](t)\left\{\left(\Delta\eta(t)+\frac{\mu}{\Gamma(\mu+1)}
\cdot\sum_{s=a}^{t-1}(t+\mu-\sigma(s))^{(\mu-1)}\Delta\eta(s)\right)\right.\\
& \qquad \left. \cdot \left[\Delta\eta(t)
+\frac{\nu}{\Gamma(\nu+1)}\sum_{s=\sigma(t)}^{b-1}(s+\nu-\sigma(t))^{(\nu-1)}\Delta\eta(s)\right]\right\}\\
&=2(\nu-1)L_{vw}[\tilde{y}](\tau)+2(\mu-1)L_{vw}[\tilde{y}](\sigma(\tau)).
\end{split}
\end{equation*}
Finally, we have that
\begin{equation*}
\begin{split}
\sum_{t=a}^{b-1} & L_{ww}[\tilde{y}](t)({_t}\Delta_b^\beta\eta(t))^2\\
&=\sum_{t=a}^{\sigma(\tau)}L_{ww}[\tilde{y}](t)\left[\Delta\eta(t)
+\frac{\nu}{\Gamma(\nu+1)}\sum_{s=\sigma(t)}^{b-1}(s
+\nu-\sigma(t))^{(\nu-1)}\Delta\eta(s)\right]^2\\
&=\sum_{t=a}^{\tau-1}L_{ww}[\tilde{y}](t)\left[\frac{\nu}{\Gamma(\nu
+1)}\sum_{s=\sigma(t)}^{b-1}(s+\nu-\sigma(t))^{(\nu-1)}\Delta\eta(s)\right]^2\\
&\qquad +L_{ww}[\tilde{y}](\tau)(1-\nu)^2+L_{ww}[\tilde{y}](\sigma(\tau))\\
&=\sum_{t=a}^{\tau-1}L_{ww}[\tilde{y}](t)\left[\frac{\nu}{\Gamma(\nu+1)}\left\{(\tau
+\nu-\sigma(t))^{(\nu-1)}-(\sigma(\tau)+\nu-\sigma(t))^{(\nu-1)}\right\}\right]^2\\
&\qquad +L_{ww}[\tilde{y}](\tau)(1-\nu)^2+L_{ww}[\tilde{y}](\sigma(\tau)).
\end{split}
\end{equation*}
Similarly as we have done in \eqref{rui10}, we obtain that
$$\frac{\nu}{\Gamma(\nu+1)}\left[(\tau+\nu-\sigma(t))^{(\nu-1)}
-(\sigma(\tau)+\nu-\sigma(t))^{(\nu-1)}\right]
=\frac{\nu(1-\nu)\prod_{i=0}^{\tau-t-2}(\nu+i)}{(\sigma(\tau)-t)\Gamma(\sigma(\tau)-t)}.$$
We are done with the proof.
\end{proof}
A trivial corollary of our result
gives the discrete-time version of Legendre's necessary condition.
\begin{corollary}[The discrete-time Legendre condition
-- \textrm{cf.}, \textrm{e.g.}, \cite{CD:Bohner:2004,Zeidan2}]
\label{cor:2}
If $\tilde{y}$ is a solution to the problem
\eqref{eq:BPCV:DT}, then
$$L_{uu}[\tilde{y}](t)+2L_{uv}[\tilde{y}](t)+L_{vv}[\tilde{y}](t)
    +L_{vv}[\tilde{y}](\sigma(t))\geq 0$$
holds for all $t\in\mathbb{T}^{\kappa^2}$, where
$[\tilde{y}](t)=(t,\tilde{y}^{\sigma}(t),\Delta\tilde{y}(t))$.
\end{corollary}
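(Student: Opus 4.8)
The plan is to obtain the statement as a direct specialization of the general fractional Legendre condition (Theorem~\ref{thm1}), in the same spirit as the preceding corollary derives the classical Euler--Lagrange equation from Theorem~\ref{thm0}. The key observation is that problem \eqref{eq:BPCV:DT} is the instance of problem (P) obtained by taking $\alpha=1$ together with a Lagrangian $L$ that does not depend on its fourth argument $w$. Under this choice the fractional functional \eqref{naosei7} reduces, since $y^\sigma(t)=y(t+1)$ and the $w$-slot is inactive, to the functional in \eqref{eq:BPCV:DT}; and a solution of \eqref{eq:BPCV:DT} is in particular a local minimizer, so Theorem~\ref{thm1} applies.

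First I would record the two consequences of this specialization. Setting $\alpha=1$ gives $\mu=1-\alpha=0$, and from \eqref{naosei10} we have ${_a}\Delta_t^0 y(t)=y(t)$, whence ${_a}\Delta_t^1 y(t)=\Delta({_a}\Delta_t^0 y(t))=\Delta y(t)$. Thus the third slot of the operator $[\cdot]$ collapses to $\Delta\tilde{y}(t)$, matching the reduced evaluation $[\tilde{y}](t)=(t,\tilde{y}^{\sigma}(t),\Delta\tilde{y}(t))$ announced in the statement. Since $L$ is independent of $w$, every partial derivative carrying a $w$-subscript vanishes identically: $L_{uw}\equiv L_{vw}\equiv L_{ww}\equiv 0$.

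Next I would substitute these data into the inequality of Theorem~\ref{thm1} and discard the vanishing terms. The blocks built from $L_{uw}$, $L_{vw}$, and $L_{ww}$ --- including the trailing sum $\sum_{s=a}^{t-1}L_{ww}[\tilde{y}](s)(\cdots)^2$ --- drop out immediately. In the surviving $L_{vv}$-contributions I would put $\mu=0$: the coefficient $(\mu-1)^2$ of $L_{vv}[\tilde{y}](\sigma(t))$ becomes $1$, while the tail sum $\sum_{s=\sigma(\sigma(t))}^{b-1}L_{vv}[\tilde{y}](s)(\cdots)^2$ carries the factor $\mu(\mu-1)=0$ inside its square and therefore vanishes term by term. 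What remains is precisely
$$
L_{uu}[\tilde{y}](t)+2L_{uv}[\tilde{y}](t)+L_{vv}[\tilde{y}](t)+L_{vv}[\tilde{y}](\sigma(t))\geq 0,
$$
the claimed condition.

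Since the reduction is purely a matter of setting $\mu=0$ and annihilating the $w$-derivatives, no genuine obstacle arises; the only points demanding a moment's care are checking that the $L_{vv}$-tail sum truly vanishes at $\mu=0$ (through the factor $\mu(\mu-1)$) and that the collapse ${_a}\Delta_t^1=\Delta$ is legitimate via \eqref{naosei10}, both of which are immediate.
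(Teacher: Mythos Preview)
Your proposal is correct and follows exactly the paper's approach: specialize problem (P) by taking $\alpha=1$ (hence $\mu=0$) and a Lagrangian independent of $w$, then read off the result from Theorem~\ref{thm1}. You have simply spelled out the details that the paper's two-line proof leaves implicit, namely the vanishing of all $w$-derivatives, the collapse $(\mu-1)^2=1$, and the disappearance of the $L_{vv}$-tail sum through the factor $\mu(\mu-1)=0$.
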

\begin{proof}
We consider problem (P) with $\alpha=1$ and $L$ not depending on $w$.
The choice $\alpha=1$ implies $\mu=0$,
and the result follows immediately from Theorem~\ref{thm1}.
\end{proof}

% ------------------------------------------------

\section{Examples}
\label{sec2}

In this section we present three illustrative examples.
The results were obtained using the open source
Computer Algebra System
\textsf{Maxima}.\footnote{\url{http://maxima.sourceforge.net}}
All computations were done running
\textsf{Maxima} on an Intel$\circledR$ Core$^{TM}$2 Duo, CPU of 2.27GHz with 3Gb of RAM.
Our \textsf{Maxima} definitions are given in Appendix.

\begin{example}
\label{ex:1}
Let us consider the following problem:
\begin{equation}
\label{eq:ex1} J_{\alpha}(y)=\sum_{t=0}^{b-1}
\left({_0}\Delta_t^\alpha y(t)\right)^2 \longrightarrow \min \, ,
\quad y(0) = A \, , \quad y(b) = B \, .
\end{equation}
In this case Theorem~\ref{thm1} is trivially satisfied. We obtain
the solution $\tilde{y}$ to our Euler-Lagrange equation \eqref{EL} for the case
$b = 2$ using the computer algebra system \textsf{Maxima}.
Using our \textsf{Maxima} package (see the definition of the
command \texttt{extremal} in Appendix) we do
\begin{verbatim}
         L1:v^2$
         extremal(L1,0,2,A,B,alpha,alpha);
\end{verbatim}
to obtain (2 seconds)
\begin{equation}
\label{eq:sol:ex1} \tilde{y}(1) = \frac{2\,\alpha\,B+\left(
{\alpha}^{3}-{\alpha}^{2}+2\,\alpha\right) \,A}{2\,{\alpha}^{2}+2}
\, .
\end{equation}
For the particular case $\alpha = 1$ the equality
\eqref{eq:sol:ex1} gives $\tilde{y}(1) = \frac{A+B}{2}$, which coincides
with the solution to the (non-fractional) discrete problem
\begin{equation*}
\sum_{t=0}^{1} \left(\Delta y(t)\right)^2
= \sum_{t=0}^{1} \left(y(t+1)-y(t)\right)^2 \longrightarrow \min
\, , \quad y(0) = A \, , \quad y(2) = B \, .
\end{equation*}
Similarly, we can obtain exact formulas of the extremal on
bigger intervals (for bigger values of $b$). For example, the
solution of problem \eqref{eq:ex1} with $b = 3$ is (35 seconds)
\begin{equation*}
\begin{split}
\tilde{y}(1) &= \frac{\left( 6\,{\alpha}^{2}+6\,\alpha\right) \,B+\left( 2\,{\alpha}^{5}
+2\,{\alpha}^{4}+10\,{\alpha}^{3}-2\,{\alpha}^{2}
+12\,\alpha\right) \,A}{3\,{\alpha}^{4}+6\,{\alpha}^{3}
+15\,{\alpha}^{2}+12} \, ,\\
\tilde{y}(2) &= \frac{\left(
12\,{\alpha}^{3}+12\,{\alpha}^{2}+24\,\alpha\right) \,B+\left(
{\alpha}^{6}+{\alpha}^{5}+7\,{\alpha}^{4}-{\alpha}^{3}+4\,{\alpha}^{2}+12\,\alpha\right)
\,A}{6\,{\alpha}^{4}+12\,{\alpha}^{3}+30\,{\alpha}^{2}+24} \, ;
\end{split}
\end{equation*}
and the solution of problem \eqref{eq:ex1} with $b = 4$ is (72
seconds)
\begin{equation*}
\begin{split}
\tilde{y}(1) &=
\frac{3\,{\alpha}^{7}+15\,{\alpha}^{6}+57\,{\alpha}^{5}+69\,{\alpha}^{4}
+156\,{\alpha}^{3}-12\,{\alpha}^{2}+144\,\alpha}{\xi} A\\
&\qquad + \frac{24\,{\alpha}^{3}+72\,{\alpha}^{2}+48\,\alpha}{\xi} B \, ,\\
\tilde{y}(2) &=
\frac{{\alpha}^{8}+5\,{\alpha}^{7}+22\,{\alpha}^{6}+32\,{\alpha}^{5}
+67\,{\alpha}^{4}+35\,{\alpha}^{3}+54\,{\alpha}^{2}+72\,\alpha}{\xi} A\\
&\qquad + \frac{24\,{\alpha}^{4}+72\,{\alpha}^{3}+120\,{\alpha}^{2}
+72\,\alpha}{\xi} B \, ,\\
\tilde{y}(3) &=
\frac{{\alpha}^{9}+6\,{\alpha}^{8}+30\,{\alpha}^{7}+60\,{\alpha}^{6}+117\,{\alpha}^{5}
+150\,{\alpha}^{4}-4\,{\alpha}^{3}+216\,{\alpha}^{2}+288\,\alpha}{\zeta} A\\
&\qquad + \frac{72\,{\alpha}^{5}+288\,{\alpha}^{4}+792\,{\alpha}^{3}
+576\,{\alpha}^{2}+864\,\alpha}{\zeta} B \, ,
\end{split}
\end{equation*}
where
\begin{equation*}
\begin{split}
\xi &= 4\,{\alpha}^{6} +24\,{\alpha}^{5}+88\,{\alpha}^{4}+120\,{\alpha}^{3}
+196\,{\alpha}^{2}+144 \, , \\
\zeta &= 24\,{\alpha}^{6}+144\,{\alpha}^{5}+528\,{\alpha}^{4}
+720\,{\alpha}^{3}+1176\,{\alpha}^{2}+864 \, .
\end{split}
\end{equation*}
Consider now problem \eqref{eq:ex1} with $b = 4$, $A=0$, and $B=1$.
In Table~\ref{tab:1} we show the extremal values
$\tilde{y}(1)$, $\tilde{y}(2)$,
$\tilde{y}(3)$, and corresponding $\tilde{J}_{\alpha}$, for some values of
$\alpha$. Our numerical results show that the fractional extremal
converges to the classical (integer order) extremal
when $\alpha$ tends to one. This is illustrated
in Figure~\ref{Fig:0}. The numerical results from Table~\ref{tab:1}
and Figure~\ref{Fig:2} show that for this problem the smallest value of
$\tilde{J}_{\alpha}$, $\alpha\in]0,1]$, occur for $\alpha=1$
(\textrm{i.e.}, the smallest value of
$\tilde{J}_{\alpha}$ occurs for the classical non-fractional case).
\begin{figure}[htp]
\begin{center}
\includegraphics[scale=0.6]{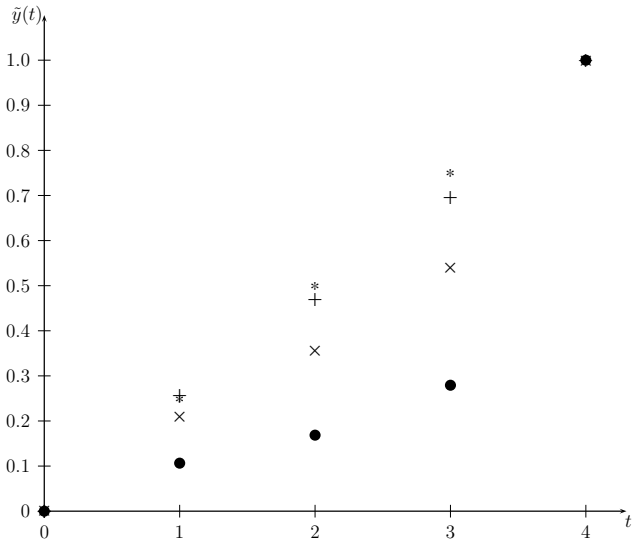}
  \caption{Extremal $\tilde{y}(t)$ of Example~\ref{ex:1}
  with $b = 4$, $A=0$, $B=1$, and different $\alpha$'s
  ($\bullet$: $\alpha=0.25$; $\times$: $\alpha=0.5$;
  $+$: $\alpha=0.75$; $\ast$: $\alpha=1$).}\label{Fig:0}
\end{center}
\end{figure}
{\small
\begin{table}[!htbp]
  \centering
  \begin{tabular}{|c|c|c|c|c|}
    \hline
    $\alpha$ & $\tilde{y}(1)$ & $\tilde{y}(2)$ & $\tilde{y}(3)$ & $\tilde{J}_{\alpha}$\\
    \hline
    0.25 & 0.10647146897355 & 0.16857982587479 & 0.2792657904952 & 0.90855653524095 \\
    \hline
    0.50 & 0.20997375328084 & 0.35695538057743 & 0.54068241469816 & 0.67191601049869 \\
    \hline
    0.75 & 0.25543605027861 & 0.4702345471038 & 0.69508876506414 & 0.4246209666969\\
    \hline
    1 & 0.25 & 0.5 & 0.75 & 0.25\\
    \hline
  \end{tabular}
\caption{The extremal values $\tilde{y}(1)$, $\tilde{y}(2)$ and $\tilde{y}(3)$
of problem \eqref{eq:ex1} with $b = 4$, $A=0$, and $B=1$
for different $\alpha$'s.}\label{tab:1}
\end{table}
}
\begin{figure}[htp]
\begin{center}
\includegraphics[scale=0.6]{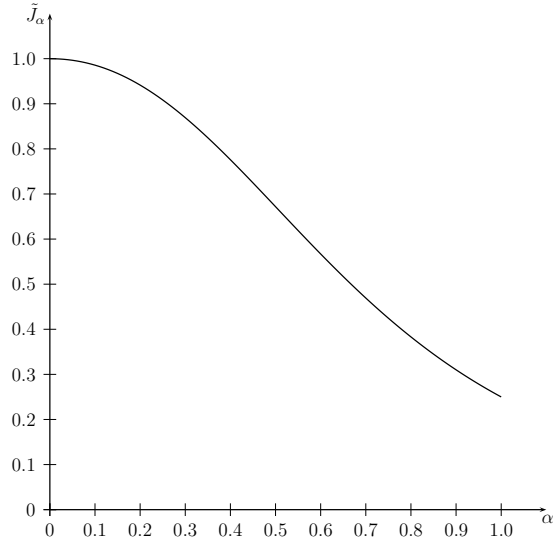}
\caption{Function $\tilde{J}_{\alpha}$ of Example~\ref{ex:1}
with $b = 4$, $A=0$, and $B=1$.}\label{Fig:2}
\end{center}
\end{figure}
\end{example}

\begin{example}
\label{ex:2}
In this example we generalize problem
\eqref{eq:ex1} to
\begin{equation}
\label{eq:ex2}
 J_{\alpha,\beta}=\sum_{t=0}^{b-1} \gamma_1
\Bigl({_0}\Delta_t^\alpha y(t)\Bigr)^2 + \gamma_2
\Bigl({_t}\Delta_b^\beta y(t)\Bigr)^2 \longrightarrow \min \, ,
\quad y(0) = A \, , \quad y(b) = B \, .
\end{equation}
As before, we solve the associated Euler-Lagrange equation
\eqref{EL} for the case $b = 2$ with the help of our \textsf{Maxima} package
(35 seconds):
\begin{verbatim}
         L2:(gamma[1])*v^2+(gamma[2])*w^2$
         extremal(L2,0,2,A,B,alpha,beta);
\end{verbatim}
\begin{equation*}
\tilde{y}(1) =  \frac{\left(2\,\gamma_{2}\,\beta+\gamma_{1}\,{\alpha}^{3}-\gamma_{1}\,{\alpha}^{2}
+2\,\gamma_{1}\,\alpha\right) \,A+\left(\gamma_{2}\,{\beta}^{3}-\gamma_{2}\,{\beta}^{2}
+2\,\gamma_{2}\,\beta+2\,\gamma_{1}\,\alpha\right) \,B}{2\,\gamma_{2}\,{\beta}^{2}+2\,
\gamma_{1}\,{\alpha}^{2}+2\,\gamma_{2}+2\,\gamma_{1}} \, .
\end{equation*}
Consider now problem \eqref{eq:ex2} with
$\gamma_1=\gamma_2=1$, $b=2$, $A=0$, $B=1$, and $\beta=\alpha$.
In Table~\ref{tab:2} we show the values of $\tilde{y}(1)$ and
$\tilde{J}_{\alpha} := J_{\alpha,\alpha}(\tilde{y}(1))$ for some values of $\alpha$.
We concluded, numerically, that the fractional extremal $\tilde{y}(1)$ tends to
the classical (non-fractional) extremal when $\alpha$ tends to one.
Differently from Example~\ref{ex:1},
the smallest value of $\tilde{J}_{\alpha}$, $\alpha\in]0,1]$, does not
occur here for $\alpha=1$ (see Figure~\ref{Fig:4}).
The smallest value of $\tilde{J}_{\alpha}$,
$\alpha\in]0,1]$, occurs for $\alpha=0.61747447161482$.
{\small
\begin{table}[!htbp]
  \centering
  \begin{tabular}{|c|c|c|}
    \hline
    $\alpha$ & $\tilde{y}(1)$  & $\tilde{J}_{\alpha}$\\
    \hline
    0.25 & 0.22426470588235 &  0.96441291360294 \\
    \hline
    0.50 & 0.375 & 0.9140625 \\
    \hline
    0.75 & 0.4575 &  0.91720703125\\
    \hline
    1 & 0.5 & 1\\
    \hline
  \end{tabular}
  \caption{The extremal $\tilde{y}(1)$ of problem \eqref{eq:ex2}
  for different values of $\alpha$
  ($\gamma_1=\gamma_2=1$, $b=2$, $A=0$, $B=1$, and $\beta = \alpha$).}\label{tab:2}
\end{table}
}
\begin{figure}[htp]
\begin{center}
\includegraphics[scale=0.6]{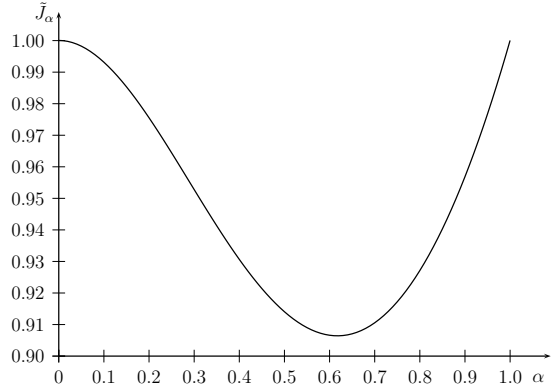}
  \caption{Function $\tilde{J}_{\alpha}$ of Example~\ref{ex:2}
with $\gamma_1=\gamma_2=1$, $b=2$, $A=0$, $B=1$, and $\beta = \alpha$.}\label{Fig:4}
\end{center}
  \end{figure}
\end{example}

\begin{example}
\label{ex:3}
Our last example is a discrete version
of the fractional continuous problem
\cite[Example~2]{agr2}:
\begin{equation}
\label{eq:ex3} J_{\alpha}=\sum_{t=0}^{1}
\frac{1}{2}\left({_0}\Delta_t^\alpha y(t)\right)^2-y^{\sigma}(t)
\longrightarrow \min \, , \quad y(0) = 0 \, , \quad y(2) = 0 \, .
\end{equation}
The Euler-Lagrange extremal of \eqref{eq:ex3}
is easily obtained with our \textsf{Maxima} package (4 seconds):
\begin{verbatim}
         L3:(1/2)*v^2-u;$
         extremal(L3,0,2,0,0,alpha,beta);
\end{verbatim}
\begin{equation}
\label{eq:sol:ex3} \tilde{y}(1) =  \frac{1}{{\alpha}^{2}+1} \, .
\end{equation}
For the particular case $\alpha = 1$ the equality \eqref{eq:sol:ex3}
gives $\tilde{y}(1) = \frac{1}{2}$, which coincides with the solution
to the non-fractional discrete problem
\begin{gather*}
\sum_{t=0}^{1} \frac{1}{2}\left(\Delta y(t)\right)^2-y^{\sigma}(t) =
\sum_{t=0}^{1} \frac{1}{2}\left(y(t+1)-y(t)\right)^2-y(t+1)
\longrightarrow \min \, , \\
y(0) = 0 \, , \quad y(2) = 0 \, .
\end{gather*}
In Table~\ref{tab:3} we show the values of $\tilde{y}(1)$ and
$\tilde{J}_{\alpha}$ for some $\alpha$'s.
As seen in Figure~\ref{Fig:6}, for $\alpha=1$
one gets the maximum value of $\tilde{J}_{\alpha}$, $\alpha\in]0,1]$.
{\small
\begin{table}[!htbp]
  \centering
  \begin{tabular}{|c|c|c|}
    \hline
    $\alpha$ & $\tilde{y}(1)$  & $\tilde{J}_{\alpha}$\\
    \hline
    0.25 & 0.94117647058824 &  -0.47058823529412 \\
    \hline
    0.50 & 0.8 & -0.4 \\
    \hline
    0.75 & 0.64 &  -0.32\\
    \hline
    1 & 0.5 & -0.25\\
    \hline
  \end{tabular}
  \caption{Extremal values $\tilde{y}(1)$ of \eqref{eq:ex3}
  for different $\alpha$'s}\label{tab:3}
\end{table}
}
\begin{figure}[htp]
\begin{center}
\includegraphics[scale=0.6]{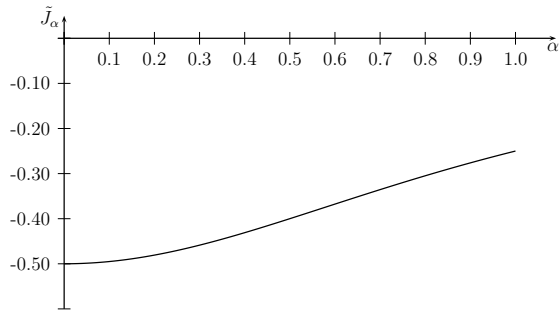}
\caption{Function $\tilde{J}_{\alpha}$ of Example~\ref{ex:3}.}\label{Fig:6}
\end{center}
\end{figure}
\end{example}

% ------------------------------------------------

\section{Conclusion}
\label{sec:conc}

The discrete-time calculus is a very important tool in
practical applications and in the modeling of real phenomena.
Therefore, it is not a surprise that fractional discrete calculus
is recently under strong development. Possible areas of application
include the signal processing, where fractional derivatives
of a discrete-time signal are particularly useful
to describe noise processes \cite{Ortig:B}.

In this paper we introduce the study of fractional discrete-time
problems of the calculus of variations or order $\alpha$,
$0 < \alpha \le 1$, with left and right discrete operators
of Riemann--Liouville type. For $\alpha = 1$
we obtain the classical discrete-time results
of the calculus of variations \cite{book:DCV}.
Main results of the paper include
a fractional summation by parts formula (Theorem~\ref{teor1}),
a fractional discrete-time Euler--Lagrange equation (Theorem~\ref{thm0}),
transversality conditions \eqref{rui1} and \eqref{rui2},
and a fractional discrete-time Legendre condition (Theorem~\ref{thm1}).
From the analysis of the results obtained from computer experiments,
we conclude that when the value of $\alpha$ approaches one,
the optimal value of the fractional discrete functional converges
to the optimal value of the classical (non-fractional) discrete problem.
On the other hand, the value of $\alpha$ for which the functional
attains its minimum varies with the concrete problem under consideration.

This research is in its beginning phase, and it will be developed further in the future.
Indeed, being the first work on fractional difference variational problems,
much remains to be done. For example, one can extend the present results for higher-order problems
of the calculus of variations with fractional discrete derivatives
of any order. Moreover, our work also opens new possibilities of research for
fractional continuous variational problems. In particular, to prove
a fractional continuous Legendre necessary optimality condition,
analogous to the fractional discrete result given by Theorem~\ref{thm1},
is a stimulating open question.
One of the referees called our attention to the fact that
the initial conditions considered in this work have ARMA formats.
The problem of finding a general formulation
leading to the specification of the ARMA parameters
seems to be also an interesting question.

% ------------------------------------------------

\section*{Appendix}

The following \textsf{Maxima} code implements Theorem~\ref{thm0}. Examples
illustrating the use of our procedure \texttt{extremal} are found in
Section~\ref{sec2}.

\small
\begin{verbatim}

kill(all)$
ratprint:false$
simpsum:true$
tlimswitch:true$

sigma(t):=t+1$

rho(t):=t-1$

rho2(t):=rho(rho(t))$

Delta(exp,t):=block( define(f12(t),exp),
return((f12(sigma(t))-f12(t))) )$

p(x,y):=(gamma(x+1))/(gamma(x+1-y))$

SumL(a,t,nu,exp):=block(
define(f1(x),exp),
f1(t)+nu/gamma(nu+1)*sum((p(t+nu-sigma(r),nu-1))*f1(r),r,(a),((t-1)))
)$

SumR(t1,b,nu1,exp1):=block(
define(f2(x),exp1),
f2(t1)+nu1/gamma(nu1+1)*sum((p(s+nu1-sigma(t1),nu1-1))*f2(s),s,(t1+1),b)
)$

DeltaL(a2,t2,alpha2,exp2):=block(
[alpha1:ratsimp(alpha2),a:ratsimp(a2),t:ratsimp(t2)],
define(f3(x),exp2), define(q(x),SumL(a,x,1-alpha1,f3(x))),
q0:q(o),
o4:float(ev(ratsimp(Delta(q0,o)),nouns)), o41:subst(o=t,o4),
remfunction(f), remfunction(q), return(o41)
)$

DeltaR(t3,b,alpha3,exp3):=block(
[alpha1:ratsimp(alpha3),b:ratsimp(b),t:ratsimp(t3)],
define(f4(x),exp3), define(q1(o),(SumR(x,b,1-alpha1,f4(x)))),
q10:q1(z),
o5:float(ev(ratsimp(-Delta((SumR(x,b,1-alpha1,f4(x))),x)),nouns)),
o51:subst(x=t,o5), remfunction(f), remfunction(q1), return(o51)
)$

EL(exp7,a,b,alpha7,beta7):=block(
[a:ratsimp(a),b:ratsimp(b),alpha:ratsimp(alpha7),beta:ratsimp(beta7)],
define(LL(t,u,v,w),exp7), b1:diff(LL(t,u,v,w),u),
sa:subst([u=y(sigma(t)),v=DeltaL(a,t,alpha,y(o)),
                        w=DeltaR(t,b,beta,y(o))],b1),
b2:diff(LL(t,u,v,w),v),
sb:subst([t=x,u=y(sigma(x)),v=DeltaL(a,x,alpha,y(x)),
                            w=DeltaR(x,b,beta,y(x))],b2),
sb1:DeltaR(o,rho(b),alpha,sb), sb11:subst(o=t,sb1),
b3:diff(LL(t,u,v,w),w),
sc:subst([t=x,u=y(sigma(x)),v=DeltaL(a,x,alpha,y(x)),
                            w=DeltaR(x,b,beta,y(x))],b3),
sc2:DeltaL(a,p2,beta,sc), sc22:subst(p2=t,sc2),
return(sa+sb11+sc22)
)$

ELt(exp8,a,b,alpha8,beta8,t8):=
ratsimp(subst(t=t8,EL(exp8,a,b,alpha8,beta8)))$

extremal(L,a,b,A9,B9,alpha9,beta9):=block(
[a:ratsimp(a),b:ratsimp(b),alpha:ratsimp(alpha9),
beta:ratsimp(beta9),A1:ratsimp(A9), B1:ratsimp(B9)],
eqs:makelist(ratsimp(ELt(L,a,b,alpha,beta,a+i)),i,0,ratsimp((rho2(b)-a))),
vars:makelist(y(ratsimp(a+i)),i,1,ratsimp((rho(b)-a))),
Xi:[a],Xf:[b],Yi:[A1],Yf:[B1],
X:makelist(ratsimp(i),i,1,ratsimp((rho(b)-a))), X:append(Xi,X,Xf),
sols:algsys(subst([y(a)=A1,y(b)=B1],eqs),vars),
Y:makelist(rhs(sols[1][i]),i,1,ratsimp((rho(b)-a))),
Y:append(Yi,Y,Yf),
return(makegamma(ratsimp(minfactorial(makefact(sols[1]))))
)$
\end{verbatim}

\normalsize

% ------------------------------------------------

\section*{Acknowledgements}

This work is part of the first author's PhD project,
which is carried out at the University of Aveiro
under the Doctoral Programme \emph{Mathematics and Applications}
of Universities of Aveiro and Minho. The financial support
of the Polytechnic Institute of Viseu and
\emph{The Portuguese Foundation for Science and Technology} (FCT),
through the ``Programa de apoio \`{a} forma\c{c}\~{a}o avan\c{c}ada
de docentes do Ensino Superior Polit\'{e}cnico'',
PhD fellowship SFRH/PROTEC/49730/2009, is here gratefully acknowledged.
The second author was supported by FCT through the PhD fellowship SFRH/BD/39816/2007;
the third  author was supported by FCT through the
R\&D unit CEOC and the \emph{Center of Research and Development
in Mathematics and Applications} (CIDMA).

We are grateful to three anonymous referees
for several relevant and stimulating remarks,
contributing to improve the quality of the paper.

% ------------------------------------------------

% ------------------------------------------------

\end{document}